\documentclass[11pt,a4paper]{article}

\usepackage{mathtools}
\usepackage{amsthm,amsmath}
\usepackage{amssymb,MnSymbol}
\usepackage[svgnames]{xcolor}
\usepackage{tabto}
\usepackage{paralist}
\usepackage[
colorlinks=true,
linkcolor=MidnightBlue,
citecolor=MidnightBlue,
urlcolor=MidnightBlue,
pagebackref=true]{hyperref}
\usepackage[alphabetic]{amsrefs}
\usepackage{graphicx}
\usepackage{pinlabel}
\usepackage{enumitem}
\usepackage[norefs]{refcheck}
\usepackage[margin=3.3cm]{geometry}

\newcommand{\N}{\mathbb{N}}
\newcommand{\Z}{\mathbb{Z}}
\newcommand{\R}{\mathbb{R}}
\newcommand{\C}{\mathbb{C}}

\newcommand{\I}{\mathcal{I}}
\newcommand{\U}{\mathcal{U}}

\newcommand{\D}{\mathcal{D}}

\newcommand{\Ga}{\Gamma}

\newcommand{\al}{\alpha}
\newcommand{\be}{\beta}
\newcommand{\de}{\delta}

\newcommand{\ga}{\gamma}
\newcommand{\la}{\lambda}

\newcommand{\id}{\mathrm{id}}

\newcommand{\PSL}{\mathrm{PSL}}

\newcommand{\Hom}{\mathrm{Hom}}

\newcommand{\homM}{\mathrm{Homeo}(M)}
\newcommand{\Homeo}{\mathrm{Homeo}}

\newcommand{\Ray}{\mathrm{Ray}}
\newcommand{\Code}{\operatorname{Code}}
\newcommand{\diam}{\operatorname{diam}}
\DeclareMathOperator{\cl}{cl}
\numberwithin{equation}{section}

\theoremstyle{plain}
\newtheorem{theorem}{Theorem}[section]
\newtheorem{corollary}[theorem]{Corollary}

\newtheorem{lemma}[theorem]{Lemma}
\newtheorem{proposition}[theorem]{Proposition}

\newtheorem*{claim*}{Claim}
\newtheorem*{theorem*}{Theorem}
\newtheorem*{proposition*}{Proposition}

\theoremstyle{definition}
\newtheorem{definition}[theorem]{Definition}
\newtheorem{example}[theorem]{Example}

\theoremstyle{remark}

\title{Local $C^0$-semi-rigidity of meandering-hyperbolic actions}
\author{Sungwoon Kim}

\date{}
\begin{document}

\maketitle

\begin{abstract}
Meandering hyperbolicity, introduced by Kapovich, Kim, and Lee, extends classical hyperbolicity beyond the setting of word-hyperbolic groups. In this paper, we prove that every meandering-hyperbolic action is locally semi-rigid in the $C^0$ topology. This extends the previously established stability theory in the Lipschitz $C^0$ topology to the full $C^0$ topology. Consequently, we recover the $C^0$-local semi-rigidity of both boundary actions of word-hyperbolic groups and cocompact lattices in semisimple Lie groups from a single dynamical principle.
\end{abstract}


\section{Introduction}

A central problem in the theory of discrete groups and geometric structures is to determine which geometric properties of representations are stable under deformation. Such questions arise naturally in representation varieties, which contain many geometrically significant representations, including holonomy representations of geometric structures. The study of the stability of representations has become a central theme connecting geometry, topology, and dynamical systems.

A classical example is provided by convex cocompact Kleinian groups. Thurston \cite{Thu79} introduced the notion of convex cocompact Kleinian groups, and Sullivan \cite{Sul} showed that convex cocompactness is essentially
the only stable geometric property among representations into
$\PSL(2,\C)$. His work established a fundamental connection between stability and the dynamics of boundary actions.

The notion of convex cocompactness was later generalized to higher-rank semisimple Lie groups through the theory of Anosov representations. Labourie \cite{Lab} first introduced Anosov representations in his study of Hitchin components, and Guichard and Wienhard \cite{GW} extended the notion to arbitrary word-hyperbolic groups. Subsequently, Kapovich, Leeb, and Porti \cite{KLP14,KLP17,KLP18} established several equivalent characterizations in geometric,
dynamical, and coarse-geometric terms. Anosov representations form an open and structurally stable class of representations and have become central objects in higher Teichmüller theory.
Despite these developments, Anosov representations are intrinsically tied to word-hyperbolic groups. This naturally raises two fundamental questions.

\begin{enumerate}
\item Why do stable representations arise primarily from word-hyperbolic groups?

\item Which dynamical properties are fundamental to the stability phenomenon?
\end{enumerate}

The first question has led to numerous attempts to develop stable notions beyond word-hyperbolic groups. For example, Danciger--Guéritaud--Kassel \cite{DGK} introduced projective convex cocompactness, a notion that includes
examples arising from non-word-hyperbolic groups. Nevertheless, most existing theories remain closely tied to hyperbolic-type phenomena and provide only isolated examples rather than a conceptual explanation of which dynamical properties of non-word-hyperbolic groups underlie stability.

The second question was largely clarified by Sullivan's dynamical viewpoint. Rather than viewing convex cocompactness as a consequence of hyperbolic geometry, Sullivan showed that its stability originates from the expanding--contracting dynamics and hyperbolicity of the induced boundary action. From this perspective, he introduced the notion of hyperbolic actions and proved that such actions are stable in the $C^1$ topology. Furthermore, he showed that convex cocompact group actions on the Riemann sphere are hyperbolic actions and therefore remain stable under perturbations within the much larger group of diffeomorphisms of the Riemann sphere. This demonstrated that the mechanism underlying stability is dynamical
rather than purely geometric.

More recently, Kapovich, Kim, and Lee \cite{KKL} extended Sullivan's dynamical viewpoint by introducing the notion of \emph{meandering hyperbolicity} that is a purely dynamical condition that extends
classical hyperbolicity beyond the setting of word-hyperbolic groups. They proved that the boundary actions of both word-hyperbolic groups and cocompact lattices in semisimple Lie groups are meandering hyperbolic, thereby providing a common dynamical principle for these two classes of actions. They established structural stability for meandering-hyperbolic actions in the Lipschitz $C^0$ topology.

Around the same time, Mann--Manning--Weisman \cite{MMW} proved a local
$C^0$-semi-rigidity theorem for boundary actions of word-hyperbolic
groups by methods from geometric group theory. Independently,
Connell--Islam--Nguyen--Spatzier \cite{CINS} established the analogous
result for boundary actions of cocompact lattices in semisimple Lie
groups using ergodic-theoretic methods. Although meandering hyperbolicity provides a common framework for these
two classes of actions, the two results were obtained independently and
by substantially different methods.

In this paper, we prove that every meandering-hyperbolic action is locally
semi-rigid in the full $C^0$ topology. Consequently, the local $C^0$-semi-rigidity results of
Mann--Manning--Weisman and Connell--Islam--Nguyen--Spatzier follow from
a single dynamical theorem. Thus, our result extends the structural stability theory of Kapovich--Kim--Lee from the Lipschitz $C^0$ topology to the full $C^0$ topology and shows that meandering hyperbolicity captures the essential dynamical mechanism underlying stability.

Our main theorem is the following.

\begin{theorem}\label{thm:main}
Let $M$ be a compact metric space, let $\Gamma<\Homeo(M)$ be finitely
generated, and let $
\iota : \Gamma\longrightarrow\Homeo(M)$
denote the inclusion homomorphism. Suppose that the $\Gamma$-action on $M$ is meandering hyperbolic. Then every action
$
\rho : \Gamma\longrightarrow\Homeo(M)
$
that is sufficiently close to $\iota$ in the $C^0$ topology is
semi-conjugate to $\iota$.
\end{theorem}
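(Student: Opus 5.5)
We follow the coding strategy of Kapovich--Kim--Lee, but replace the Lipschitz control with purely topological control. We build a surjection $\phi:M\to M$ with $\phi\circ\rho(\gamma)=\gamma\circ\phi$ for all $\gamma\in\Gamma$, so $\phi$ semi-conjugates $\rho$ to $\iota$.

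Recall what meandering hyperbolicity provides. There is a finite cover $\U=\{U_1,\dots,U_k\}$ of $M$ by open sets, and to each $U_i$ is attached a finite set of generators $s\in\Gamma$ that are uniformly expanding on the relevant subsets (with a meandering allowance). Iterating this yields, for each point $x$, a family of admissible codes: sequences $(\gamma_n)$ in $\Gamma$ such that $\gamma_n x$ stays in the expanding domains. The nested compact sets
\[
K_n=\gamma_1^{-1}\cdots\gamma_n^{-1}\bigl(\overline{U_{i_n}}\bigr)
\]
then have diameters tending to $0$ uniformly, and determine a single point. Moreover, codes of nearby points are related by bounded \lq\lq meandering\rq\rq\ moves, and this is what makes the induced point map well defined. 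The codes are combinatorial objects and do not depend on Lipschitz constants.

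\textbf{Step 1 (robustness of the combinatorial data).} Expansion is a $C^1$ or Lipschitz notion and does not survive $C^0$ perturbation. We therefore use a topological substitute. For each generator $s$ and each $i$, the containment $s^{-1}(\overline{V_j})\subset U_i$ (for the appropriate shrunken sets $V_j\Subset U_j$) is an open condition on $s$ in the $C^0$ topology, because $M$ is compact. Since $\Gamma$ is finitely generated and only finitely many such inclusions are involved, there is $\epsilon>0$ such that every $\rho$ with $d_{C^0}(\rho(s),s)<\epsilon$ on the generators satisfies the same finite list of inclusions.

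\textbf{Step 2 (define $\phi$).} Given $x\in M$, we produce a $\rho$-admissible code $(\gamma_n)$ for $x$, using the same rule applied to $\rho(\gamma_n)$ and the inclusions from Step 1. We then define $\phi(x)$ as the unique point of $\bigcap_n \gamma_1^{-1}\cdots\gamma_n^{-1}(\overline{U_{i_n}})$, computed for the original action $\iota$. The intersection is a single point because the contraction used here belongs to $\iota$, which is unperturbed.

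Three things must then be checked. First, $\phi$ is independent of the chosen code; this uses the meandering equivalence of codes, which is itself combinatorial and hence transfers by Step 1. Second, equivariance holds: prepending $\gamma$ to a code of $x$ gives a code of $\rho(\gamma)^{-1}x$. Third, $\phi$ is continuous: nearby points share long initial code segments, and uniform shrinking for $\iota$ then bounds $d(\phi(x),\phi(y))$.

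\textbf{Step 3 (surjectivity).} We expect to show that $\phi$ is $C^0$-close to the identity. The point $x$ lies in the $\rho$-nested sets, whose first stages coincide up to $\epsilon$-errors with the $\iota$-nested sets. A degree or nearness argument then gives surjectivity: a map uniformly close to $\mathrm{id}$ on the compact space $M$ is onto. If $M$ is not a manifold this fails in general, so instead we argue dynamically. The image of $\phi$ is a closed $\Gamma$-invariant set, and it meets every $U_i$. Minimality of the limit set, or the covering property of the code space, then forces $\phi(M)=M$.

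\textbf{Main obstacle.} The hardest step is well-definedness and continuity in Step 2 without contraction for $\rho$. Under $\rho$ the nested sets need not shrink, so distinct $x$ can share codes; this is exactly why the result is a semi-conjugacy rather than a conjugacy. The delicate point is to ensure that the meandering choices made along a $\rho$-orbit remain $\iota$-admissible forever, and not only at finitely many stages. We would handle this by showing that admissibility is a local condition, checked at each step, on consecutive pairs; Step 1 makes each such check $C^0$-open, uniformly over the finite alphabet.
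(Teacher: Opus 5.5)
There is a genuine gap at well-definedness. Your map $y\mapsto x$ is a sound construction: follow the $\rho$-orbit of $y$, choose $\alpha(k)$ with $B(y_k,2\delta/3)\subset U_{\alpha(k)}$, and take the unique point of the $\iota$-nested sets $r^\alpha_k[B(y_{k+1},\delta/3)]$. It is exactly the covering argument in Step 4 of the paper. But nothing in the proposal shows that two $\rho$-codes $\alpha,\beta$ for the same $y$ give the same point.

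Meandering hyperbolicity relates different $\iota$-codes of the \emph{same} point, not codes of nearby points. It is an infinitary statement about all codes, so it does not ``transfer'' to $\rho$ through finitely many $C^0$-open inclusions. Moreover, $\alpha$ and $\beta$ are $\iota$-codes of a priori different points $x_\alpha,x_\beta$, so there is nothing to apply $L$-equivalence to. Your ``main obstacle'' paragraph resolves only the easy issue: admissibility persists because $s^{-1}[B(y,\delta/3)]\supset B(\rho(s)^{-1}(y),\delta/3)$ once $d_\infty(\rho(s),s)<(\lambda-1)\delta/3$. Without well-definedness you also cannot get equivariance by iterating over generators. Only the initial symbol of a code is free, so prepending a longer word breaks admissibility at the intermediate indices.

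The missing idea is to work on the target side. For fixed $x$ and an $\iota$-code $\alpha$ of $x$, set $\Phi(x)=\bigcap_k\cl(\rho(r^\alpha_k)[B(x^\alpha_{k+1},\eta)])$ and prove the following.
(a) These perturbed domains are nested. This needs $C^0$-stability of expansion at radii in $[\delta_0,\delta]$; it fails at small radii.
(b) If $d_S(r^\alpha_i,r^\beta_j)\le L$, then $D^{\alpha,\rho}_{i+n_0}\subset D^{\beta,\rho}_j$ with $n_0$ uniform. This uses the $2$-Lipschitz uniform radius $T_{\delta_0/2}$ on the finite ball $W_L$, applied to the refined datum $\mathcal D'$ where the interpolating chains live. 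Together with meandering hyperbolicity, this makes $\Phi(x)$ independent of the code.
(c) $\Phi(sx)=\rho(s)[\Phi(x)]$, using the free first symbol.
(d) $\Phi(x)\subset\cl B(x,\delta_0/3)$, via a special code.
(e) $\Phi(x)\cap\Phi(x')=\varnothing$ for $x\neq x'$, after moving by some $\gamma$ with $d(\gamma x,\gamma x')\ge\delta$ (Corollary 3.5).
Only then does $y\in\Phi(x_\alpha)\cap\Phi(x_\beta)$ force $x_\alpha=x_\beta$.

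Surjectivity is the second gap. As you concede, $C^0$-closeness to the identity does not give surjectivity on a general compactum. Minimality is neither assumed nor implied: the trivial group acting on a two-point discrete space is meandering hyperbolic. ``The covering property of the code space'' is not an argument. In your direction, surjectivity means finding, for each $x$, a point $y$ whose $\rho$-code has $\iota$-limit $x$. The only available candidates are points of $\Phi(x)$; their existence is (a), and identifying their image as $x$ again needs (b)--(e). In the paper this step is free, because $\phi$ is defined through its fibres $\Phi(x)\neq\varnothing$.

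Finally, your Step 1 and the sets $K_n=\gamma_1^{-1}\cdots\gamma_n^{-1}(\overline{U_{i_n}})$ do not produce nested sets. Nesting whole cover elements would require a Markov-type condition that an expanding datum does not provide. You must use balls of a fixed radius together with the expansion estimate above.
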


Our proof uses only dynamical and topological arguments.
While inspired by the ideas of Mann--Manning--Weisman
\cite{MMW}, our argument is carried out entirely within the framework
of meandering hyperbolicity.
Starting with an expanding datum, we establish uniform contraction
properties that remain stable under sufficiently small $C^0$
perturbations.
These estimates allow us to construct the desired semi-conjugacy from
perturbed domain sequences, and meandering hyperbolicity ensures that
the construction is independent of the chosen code.

Since both boundary actions of word-hyperbolic groups  and cocompact lattices in semisimple Lie groups are
meandering hyperbolic \cite{KKL}, the theorem immediately yields the
following applications.

\begin{corollary}
The action of a word-hyperbolic group on its Gromov boundary is locally
semi-rigid in the $C^0$ topology. Likewise, if $\Gamma$ is a cocompact
lattice in a semisimple Lie group $G$ and $P<G$ is a parabolic subgroup,
then the action of $\Gamma$ on $G/P$ is locally semi-rigid in the
$C^0$ topology.
\end{corollary}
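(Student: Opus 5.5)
The corollary follows from Theorem~\ref{thm:main} once its hypotheses are checked in each of the two settings. The plan is to identify the relevant action as a finitely generated subgroup of $\Homeo(M)$ for a compact metric space $M$, quote \cite{KKL} for meandering hyperbolicity, and apply the theorem.

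\emph{Hyperbolic groups.} Let $\Gamma$ be word-hyperbolic and $M=\partial\Gamma$ its Gromov boundary. $M$ is compact and metrizable via a visual metric. I would first reduce to the case where $\Gamma$ is non-elementary. If $\Gamma$ is finite, the boundary is empty. If $\Gamma$ is virtually cyclic, the boundary has two points; then any homomorphism into $\Homeo(M)$ that is $C^0$-close to the boundary action agrees with it on each generator, since $\Homeo(M)$ is finite and discrete. So the statement is trivial in the elementary case.

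In the non-elementary case, the kernel $K$ of the boundary action is the maximal finite normal subgroup of $\Gamma$. The quotient $\Gamma/K$ is a finitely generated subgroup of $\Homeo(M)$, and it is again hyperbolic with the same boundary. By \cite{KKL}, its action is meandering hyperbolic. Theorem~\ref{thm:main} then applies to representations of $\Gamma/K$.

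For a representation $\rho$ of $\Gamma$ itself, I would argue as follows. Closeness to the boundary action is measured on a finite generating set $S$ of $\Gamma$. Its image generates $\Gamma/K$, and Theorem~\ref{thm:main} holds for any finite generating set, since the $C^0$ neighborhoods defined by two finite generating sets are comparable. The one subtle point is that $\rho$ need not be trivial on $K$.

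\begin{itemize}
\item I expect the proof of Theorem~\ref{thm:main} never uses injectivity of the inclusion, only the dynamics of the image. So it should go through verbatim for any homomorphism $\iota:\Gamma\to\Homeo(M)$ whose image is meandering hyperbolic.
\item Failing that, I would adopt the convention of \cite{MMW} that local semi-rigidity is stated for the faithful quotient.
\end{itemize}

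\emph{Cocompact lattices.} Let $\Gamma<G$ be a cocompact lattice in a semisimple Lie group, and $P<G$ a parabolic subgroup. Then $G/P$ is a compact manifold, and $\Gamma$ is finitely generated, being cocompact. Modulo the kernel of the action, which is a finite central subgroup meeting $\Gamma$, we view $\Gamma$ as a subgroup of $\Homeo(G/P)$. \cite{KKL} shows this action is meandering hyperbolic, so Theorem~\ref{thm:main} yields semi-conjugacy for nearby actions.

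The main obstacle is not analytic but bookkeeping: reconciling the statement for $\Gamma$ with Theorem~\ref{thm:main}, which is formulated for subgroups of $\Homeo(M)$. This amounts to handling the kernel of the action and checking that the notion of ``sufficiently close'' does not depend on the chosen generating set. Both points are routine, and the corollary follows.
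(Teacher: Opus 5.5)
Your proposal follows the paper's proof, which simply quotes meandering hyperbolicity of the two actions (via \cite{Sul}, hyperbolic $\Rightarrow$ meandering hyperbolic, for boundary actions of hyperbolic groups, and \cite{KKL} for lattices) and applies Theorem~\ref{thm:main}. The paper omits the bookkeeping you add on elementary groups, independence of the generating set, and non-faithful actions, but it is sound: for a finite kernel $K$ the word metrics on $\Gamma$ and $\Gamma/K$ differ by a bounded additive constant, so $L$-equivalence and the proof of Theorem~\ref{thm:main} transfer. The one caveat is that your claim that the kernel in the lattice case is finite and central holds only under standard hypotheses on $G$ and $P$ (e.g.\ no compact factors, $P$ containing no simple factor).
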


The results of this paper provide further evidence that meandering hyperbolicity furnishes a natural dynamical framework for studying stability phenomena in representation varieties. We expect this viewpoint to provide a useful framework for studying
stable representations beyond the classical hyperbolic setting and to
lead to further applications in higher-rank geometry and geometric group
theory.

The paper is organized as follows.
In Section~2, we review the background on hyperbolic spaces, Cayley graphs,
and expanding actions.
In Section~3, we establish the uniform contraction properties of expanding actions that form the dynamical foundation of the paper.
In Section~4, we prove that these contraction properties persist under sufficiently small $C^0$
-perturbations and develops the perturbation theory needed for the proof of the main theorem.
Finally, in Section~5, we combine these results with meandering hyperbolicity
to prove the local $C^0$-semi-rigidity theorem and derive the corresponding
applications to boundary actions of word-hyperbolic groups and cocompact
lattices in semisimple Lie groups.

\section{Preliminaries}

In this section, we collect the background material used throughout the
paper. We briefly review hyperbolic spaces, Cayley graphs, expanding
actions, and meandering hyperbolicity. Throughout the paper, $M$ denotes a compact metric space,
$\Homeo(M)$ denotes its group of homeomorphisms, and
$\Gamma<\Homeo(M)$ denotes a finitely generated subgroup.

\subsection{Hyperbolic spaces and Cayley graphs}

We recall Gromov hyperbolicity and Cayley
graphs. These notions provide the geometric setting for
word-hyperbolic groups.

Let $M$ be a metric space. We denote by $B(x,r)$ the open ball of radius $r$ centered at $x$, and
by $\cl(A)$ the closure of a subset $A\subset M$.
A metric space $M$ is said to be \emph{proper} if every closed ball is compact. If every pair of points can be joined by a geodesic segment, $M$ is called a \emph{geodesic space}. 

\begin{definition}[Quasi-geodesic]
Let $I$ be an interval of $\R$ (or its intersection with $\Z$) and $(X,d)$ a metric space. A map $c : I\to X$ is a $(B,C)$-quasi-geodesic, where
$B\geq 1$ and $C\geq 0$, if for all $t,t'\in I$,
\[
\frac{1}{B}|t-t'|-C\le d(c(t),c(t'))\le B|t-t'|+C.
\]
\end{definition}

A quasi-geodesic is the coarse analogue of a geodesic. Gromov \cite{Gro} introduced $\de$-hyperbolic spaces as a metric analogue of negatively curved manifolds to capture the essential large-scale features of hyperbolic geometry and to study groups from a geometric perspective.

\begin{definition}[Hyperbolic space]
Fix a basepoint $o\in M$. For $x,y\in M$, define the Gromov product by
\[
(x\mid y)_o
 =\frac12\bigl(d(x,o)+d(y,o)-d(x,y)\bigr).
\]
The space $M$ is \emph{$\delta$-hyperbolic} if
\[
(x\mid z)_o
 \geq \min\{(x\mid y)_o,(y\mid z)_o\}-\delta
\]
for all $x,y,z\in M$.
A geodesic space is said to be \emph{hyperbolic} if it is $\de$-hyperbolic for some $\de\ge 0$.
\end{definition}

Although Gromov originally defined hyperbolicity using the Gromov product, it was subsequently established that, in the setting of geodesic metric spaces, this definition is equivalent to the $\de$-thin triangle condition. More precisely, $M$ is $\de$-hyperbolic if and only if for every geodesic triangle in $M$, each side of the triangle is contained in the closed $\de$-neighborhood of the union of the other two sides. 

We now recall Cayley graphs and word metrics.
Let $\Gamma$ be a finitely generated group with a finite generating set
$S$. The word metric on $\Gamma$ associated to $S$ is defined by
\[
d_S(g,h)=|g^{-1}h|_S,
\]
where $|\gamma|_S$ denotes the word length of $\gamma\in\Gamma$,
namely,
\[
|\gamma|_S
 =\min\{n\geq0  :  \gamma=s_1\cdots s_n,\ 
 s_i\in S\cup S^{-1}\}.
\]
Equivalently, $d_S(g,h)$ is the length of the shortest word in the
alphabet $S\cup S^{-1}$ representing $g^{-1}h$. Assigning length $1$ to each edge turns the Cayley graph into a geodesic metric space. A finitely generated group $\Gamma$ is called \emph{word-hyperbolic} if its Cayley graph is hyperbolic. Word hyperbolicity is independent of the choice of the finite generating set, since any two Cayley graphs of $\Gamma$ are quasi-isometric.


\subsection{Expanding actions}

We recall the notion of an expanding action introduced by
Kapovich, Kim, and Lee \cite{KKL}. The main objects associated with an expanding action are
nested domain sequences, whose uniform contraction properties will
play a central role throughout the paper.

We first introduce the local notion of an expanding homeomorphism.
Let $(M,d)$ be a metric space and let $f\in\Homeo(M)$.
Denote by $f[A]$ the image of $A\subset M$ under $f$. Given $\la>1$ and $U\subset M$, we say that $f$ is \emph{$(\la,U)$-expanding} (or \emph{$\la$-expanding on $U$}) if
\[
d(f(x),f(y))\ge\la d(x,y)
\]
for all $x,y\in U$. In this case, we also say that $U$ is a \emph{$(\la,f)$-expanding domain}. 

\begin{definition}[Expanding homeomorphism]
Let $\lambda>1$, $\delta>0$, and $U\subset M$.
A homeomorphism $f\in\Homeo(M)$ is
$(\lambda,U;\delta)$-expanding if
\[
B(f(x),\la r)\subset f[B(x,r)]\ \textup{ whenever } B(x,r)\subset U \textup{ and }0<r\le\de.
\]
\end{definition}

It follows immediately from the definition that if $f$ is $(\la,U;\de)$-expanding, it is also $(\la,U;\eta)$-expanding for any $0<\eta\le\de$. 
The dual notion of an expanding homeomorphism is defined as follows.
\begin{definition}[Contracting homeomorphism]
Let $\lambda>1$, $\delta>0$, and $U\subset M$.
A homeomorphism $f\in\Homeo(M)$ is
$(\lambda^{-1},U;\delta)$-contracting if
\[
f[B(x,r)]\subset B(f(x),\la^{-1}r) \textup{ whenever } B(x,r)\subset U \textup{ and }0<r\le\de.
\]
\end{definition}

By definition, the inverse of an expanding homeomorphism satisfies the following contraction property.

\begin{lemma}\label{lem:con}
Suppose that $f$ is $(\lambda,U;\delta)$-expanding. If
$B(f^{-1}(y),\lambda^{-1} r)\subset U$
for some $0<r\le \lambda\delta$, then
\[
f^{-1}[B(y,r)]
\subset
B\left(f^{-1}(y),\lambda^{-1}r\right).
\]
\end{lemma}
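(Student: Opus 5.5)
Set $x=f^{-1}(y)$ and $s=\lambda^{-1}r$. Then $0<s\le\delta$, since $r\le\lambda\delta$. By hypothesis $B(x,s)\subset U$. These are exactly the conditions in the definition of a $(\lambda,U;\delta)$-expanding homeomorphism, applied at the point $x$ with radius $s$. The definition therefore gives
\[
B(f(x),\lambda s)\subset f[B(x,s)],
\]
which reads $B(y,r)\subset f[B(x,\lambda^{-1}r)]$, because $f(x)=y$ and $\lambda s=r$.

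Next we apply $f^{-1}$ to both sides. Since $f$ is a homeomorphism, and in particular a bijection of $M$, taking preimages preserves inclusions, and $f^{-1}[f[A]]=A$ for every $A\subset M$. Hence
\[
f^{-1}[B(y,r)]\subset f^{-1}\bigl[f[B(x,\lambda^{-1}r)]\bigr]=B\left(f^{-1}(y),\lambda^{-1}r\right),
\]
which is the claimed inclusion.

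The only point needing care is the choice of radius. The expanding condition must be applied with the smaller radius $\lambda^{-1}r$, not with $r$. This is why the hypothesis asks that $B(f^{-1}(y),\lambda^{-1}r)\subset U$ and that $r\le\lambda\delta$, rather than placing conditions on $B(f^{-1}(y),r)$. With the rescaling $s=\lambda^{-1}r$ made explicit, the argument is just the definition followed by bijectivity of $f$.
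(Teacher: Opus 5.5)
Your proof is correct and follows the same approach as the paper, which only remarks that the lemma is immediate from the definition. You apply the expanding condition at $f^{-1}(y)$ with radius $\lambda^{-1}r\le\delta$ and then pull back by the bijection $f$; this is exactly the unpacking the paper leaves implicit.
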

\begin{proof}
This lemma is immediate from the definition.
\end{proof}

\begin{definition}
A homeomorphism $f\in \homM$ is said to be \emph{expanding} at $x\in M$ if there exists a neighborhood $U$ of $x$ such that $f$ is $(\la,U;\de)$-expanding for some $\la>1$ and $\de>0$.
We say that $\Gamma<\Homeo(M)$ is expanding on $M$ if, for every
$x\in M$, there exists $\gamma\in\Gamma$ such that $\gamma^{-1}$
is expanding at $x$.
\end{definition}

Compactness yields the following finite description of an expanding
action.

\begin{proposition}[Existence of expanding data]
Let $M$ be a compact metric space, and let
$\Gamma<\Homeo(M)$ be a finitely generated group that is expanding on
$M$. Then there exist an open cover
$
\mathcal U=\{U_1,\ldots,U_n\}
$
of $M$, a finite symmetric generating set
$
S=\{s_1,\ldots,s_n\}
$
of $\Gamma$, and constants $\lambda>1$ and $\delta>0$ such that
$s_i^{-1}$ is $(\lambda,U_i;\delta)$-expanding for $1\leq i\leq n$.
\end{proposition}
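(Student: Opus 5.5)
The plan is a compactness argument followed by an enlargement of the generating set so that each expanding element carries its own index. For every $x\in M$ the expanding hypothesis gives $\gamma_x\in\Gamma$ such that $\gamma_x^{-1}$ is expanding at $x$. By the definition of expanding at a point, there are an open neighborhood $V_x$ of $x$ and constants $\lambda_x>1$, $\delta_x>0$ such that $\gamma_x^{-1}$ is $(\lambda_x,V_x;\delta_x)$-expanding. Since $M$ is compact, finitely many of these sets, say $V_{x_1},\ldots,V_{x_m}$, cover $M$. Set
\[
\lambda=\min_j\lambda_{x_j}>1,\qquad \delta=\min_j\delta_{x_j}>0 .
\]
I would then check two monotonicity facts directly from the definition:
\begin{itemize}
\item a $(\lambda',U;\delta')$-expanding map is $(\lambda,U;\delta)$-expanding whenever $\lambda\le\lambda'$ and $\delta\le\delta'$, because $B(f(x),\lambda r)\subset B(f(x),\lambda' r)\subset f[B(x,r)]$;
\item if $f$ is $(\lambda,U;\delta)$-expanding and $U'\subset U$, then $f$ is $(\lambda,U';\delta)$-expanding, because any ball contained in $U'$ is contained in $U$.
\end{itemize}
With these, each $\gamma_{x_j}^{-1}$ is $(\lambda,V_{x_j};\delta)$-expanding.

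The main obstacle is the indexing. The statement asks that the cover and the generating set have the same size $n$, that the generating set be symmetric, and that $s_i^{-1}$ be expanding on $U_i$ for every $i$. So every generator, including the inverses and the original generators of $\Gamma$, must be matched with some open set on which its inverse is expanding. Empty sets can be used for this: the condition "whenever $B(x,r)\subset\emptyset$" is vacuous, so every homeomorphism is $(\lambda,\emptyset;\delta)$-expanding. I also allow repeated entries in the list.

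Let $T$ be a finite generating set of $\Gamma$, and put
\[
S_0=\{\gamma_{x_1},\ldots,\gamma_{x_m}\}\cup T,\qquad S=S_0\cup S_0^{-1},
\]
listed as $s_1,\ldots,s_n$, with repetitions allowed and with $s_j=\gamma_{x_j}$ for $j\le m$. Then $S$ is finite and symmetric, and it generates $\Gamma$ because it contains $T$. Define
\[
U_j=V_{x_j}\ (j\le m),\qquad U_j=\emptyset\ (j>m).
\]
Then $\{U_1,\ldots,U_n\}$ is an open cover of $M$, since the first $m$ sets already cover $M$. For $j\le m$, $s_j^{-1}=\gamma_{x_j}^{-1}$ is $(\lambda,U_j;\delta)$-expanding by the first step, and for $j>m$ the condition holds vacuously. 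This gives all the conclusions.

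If empty members of the cover are not acceptable, I would instead take $U_j=V_{x_1}$ for $j>m$, but this requires $s_j^{-1}$ to be expanding on $V_{x_1}$, which an arbitrary generator need not be. I would therefore keep the empty-set padding, and note that if $\emptyset$ is excluded, the statement should be read with the cover indexed by a subset of $S$.
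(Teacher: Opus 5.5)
Your proposal is correct and follows the paper's proof essentially step by step: take a finite subcover of the expanding neighborhoods, set $\lambda$ and $\delta$ to be the minima of the constants, then enlarge to a finite symmetric generating set, giving each added generator a (possibly empty) domain on which the expansion condition holds vacuously. Your explicit choice $U_j=\emptyset$ and your remark that repetitions in the list $s_1,\ldots,s_n$ must be allowed make precise what the paper leaves implicit when it says the added domains ``may be empty.''
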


\begin{proof}
For each $x\in M$, choose $\gamma_x\in\Gamma$, constants
$\lambda_x>1$ and $\delta_x>0$, and a neighborhood $U_x$ of $x$
such that $\gamma_x^{-1}$ is
$(\lambda_x,U_x;\delta_x)$-expanding.
By compactness, the open cover $\{U_x\}_{x\in M}$ admits a finite
subcover
$
U_{x_1},\ldots,U_{x_n}.
$
Set
\[
\lambda=\min_{1\le i\le n}\lambda_{x_i}>1,
\qquad
\delta=\min_{1\le i\le n}\delta_{x_i}>0,
\]
and, for $1\le i\le n$, set
\[
U_i=U_{x_i},
\qquad
s_i=\gamma_{x_i}.
\]
Then $\mathcal U=\{U_1,\ldots,U_n\}$ is an open cover of $M$, and
$s_i^{-1}$ is $(\lambda,U_i;\delta)$-expanding for
 $1\leq i\leq n$.

The finite set $S=\{s_1,\ldots,s_n\}$ need not generate $\Gamma$. Since $\Gamma$ is finitely generated, $S$ is contained in a finite
symmetric generating set $S'$ of $\Gamma$. Write
\[
S'=S\cup\{s_{n+1},\ldots,s_m\}.
\]
For $i=n+1,\ldots,m$, let $U_i$ be the $(\la,s_i^{-1})$-expanding domain, which may be empty. Then
\[
\mathcal U'=\{U_1,\ldots,U_m\}
\]
is still an open cover of $M$, and the expansion condition for the
additional generators holds. Replacing $\mathcal U$ and $S$
by $\mathcal U'$ and $S'$, respectively, completes the proof.
\end{proof}

Motivated by the preceding proposition, we introduce the following notion.

\begin{definition}[Expanding datum]
An \emph{expanding datum} for the $\Gamma$-action on $M$ is a tuple
$\mathcal D=(I,\mathcal U,S,\lambda,\delta)$ satisfying the following
conditions for some $n\in\mathbb N$:
\begin{itemize}
\item[\textup{(i)}] $I=\{1,\ldots,n\}$,
\item[\textup{(ii)}] $\mathcal U=\{U_1,\ldots, U_n\}$ is an open cover of $M$,
\item[\textup{(iii)}] $S=\{s_1,\ldots, s_n\} \subset \Gamma$ is a symmetric generating set of $\Gamma$, i.e., $S=S^{-1}$,
\item[\textup{(iv)}] $\la>1$, and $\de>0$ is a Lebesgue number for $\U$,
\item[\textup{(v)}]  $s_i^{-1}$ is $(\la,U_i;\de)$-expanding for all $i\in I$.
\end{itemize}
\end{definition}

The preceding proposition shows that every expanding action on a compact
metric space admits an expanding datum. Moreover, if
$(I,\mathcal U,S,\lambda,\delta)$ is an expanding datum, then so is
$(I,\mathcal U,S,\lambda,\eta)$ for every $0<\eta\leq\delta$.

Fix an expanding datum
$\mathcal D=(I,\mathcal U,S,\lambda,\delta)$ and
$\eta\in(0,\delta]$. Given $x\in M$, set $x_0=x$ and choose
$\alpha(0)\in I$.
Define recursively
\[
x_{k+1}=s_{\alpha(k)}^{-1}(x_k).
\]
Since $\delta$ is a Lebesgue number for $\mathcal U$ and
$\eta\leq\delta$, there exists $\alpha(k+1)\in I$ such that
\[
B(x_{k+1},\eta)\subset U_{\alpha(k+1)}.
\]
Continuing inductively,
we obtain a sequence
$\alpha:\mathbb N\to I$,
called a
$(\mathcal D,\eta)$-code
for
$x$.

\begin{definition}[Code] Let $\mathcal D=(I,\mathcal U,S,\lambda,\delta)$ be an expanding datum,
let $0<\eta\leq\delta$, and let $x\in M$.
A sequence $\al : \N \to I$ is called a \emph{$(\D,\eta)$-code} for $x$ if 
\begin{itemize}
\item[\textup{(i)}] $x_0^\al=x$,
\item[\textup{(ii)}] $x_{k+1}^\al=s_{\alpha(k)}^{-1} (x_k^\al)$ for all $k\ge 0$,
\item[\textup{(iii)}] $B(x_k^\al,\eta)\subset U_{\alpha(k)}$ for all $k\geq 1$.
\end{itemize}
A $(\mathcal D,\eta)$-code $\alpha$ is called \emph{special} if, in addition,
$
B(x^\alpha_0,\eta)\subset U_{\alpha(0)}.
$
\end{definition}

For each $(\D,\eta)$-code $\al$ for $x$, define its ray $r^\al :\N\to \Gamma$ by 
\[ r^\alpha_k=s_{\alpha(0)}\cdots s_{\alpha(k)}\]
and its domain sequence by 
\[ D^\alpha_k=r^\al_k [{B(x_{k+1}^\al,\eta)}].\]
By construction,
$r_k^\alpha(x_{k+1}^\alpha)=x$, and therefore $x\in D_k^\alpha$ for every $k\ge0$.

The sequence $(D_k^\alpha)_{k\ge0}$ is called the
\emph{$\alpha$-domain sequence}. These domain sequences are the main technical objects used to construct
the semi-conjugacy. In Section~3, we show that they are nested and shrink
uniformly.

For $x\in M$, let
\[
\Code_x(\mathcal D,\eta)
\quad\text{and}\quad
\Ray_x(\mathcal D,\eta)
\]
denote the sets of all
$(\mathcal D,\eta)$-codes
and
$(\mathcal D,\eta)$-rays
for
$x$,
respectively.

An expanding action generally admits many different expanding data.
For later use, we introduce the following refinement relation.

\begin{definition}[Refinement]\label{def:refine}
Suppose that $\Gamma<\Homeo(M)$ is expanding on $M$. Let
$\mathcal D=(I,\mathcal U,S,\lambda,\delta)$ and 
$\mathcal D'=(I',\mathcal U',S',\lambda',\delta')$
be expanding data for the
$\Gamma$-action.
We write $\mathcal D\prec\mathcal D'$ and say that
$\mathcal D'$
is a
\emph{refinement}
of
$\mathcal D$
if

\begin{enumerate}[label=(\roman*),nosep]
\item
$I\subset I'$;

\item
$U_i\subset U_i'$
for every
$i\in I$;

\item
$s_i=s_i'$
for every
$i\in I$;

\item
$0<\delta\le\delta'$;

\item
$\lambda\ge\lambda'>1$.
\end{enumerate}
\end{definition}

It follows directly from the definitions that if
$\mathcal D\prec\mathcal D'$,
then
\[
\Code_x(\mathcal D,\eta)
\subset
\Code_x(\mathcal D',\eta)
\]
and
\[
\Ray_x(\mathcal D,\eta)
\subset
\Ray_x(\mathcal D',\eta)
\]
for every
$x\in M$ and every $\eta\in (0,\de]$. Thus, a refinement enlarges the admissible expanding domains
while allowing the expansion constant to decrease and the admissible
radius to increase.

\subsection{Meandering hyperbolicity}

We recall the notion of meandering hyperbolicity introduced by
Kapovich, Kim, and Lee \cite{KKL}. It is formulated in terms of the
coarse behavior of rays arising from codes associated with an expanding
datum. The key notion is the following equivalence relation.

\begin{definition}[$L$-equivalence]\label{def}
Suppose that $\Gamma<\homM$ is expanding on $M$ with an expanding
datum $\D=(I,\U,S,\la,\de)$,
and let $0<\eta\le\de$.

\begin{enumerate}[label=(\alph*),nosep,leftmargin=*]
\item
Define a relation $\approx_L$ on
$\Ray_x(\D,\eta)$ as follows. For
$r^\al,r^\be\in\Ray_x(\D,\eta)$, we write
\[
r^\al\approx_L r^\be
\]
if there exist infinite subsets $P,Q\subset\mathbb N$ for which the
subsets
\[
r^\alpha(P)=\{r^\alpha_k:k\in P\},
\qquad
r^\beta(Q)=\{r^\beta_j:j\in Q\}
\]
have Hausdorff distance at most $L$ in $(\Gamma,d_S)$.

The \emph{$L$-equivalence relation}, denoted by $\sim_L$, is the
equivalence relation on $\Ray_x(\D,\eta)$ generated by $\approx_L$.
Thus, we write
\[
r^\al\sim_L r^\be
\]
and say that $r^\al$ and $r^\be$ are \emph{$L$-equivalent} if there
exists a finite sequence of rays
\[
r^\al=r^{\ga_0},r^{\ga_1},\ldots,r^{\ga_m}=r^\be
\]
in $\Ray_x(\D,\eta)$ such that
\[
r^{\ga_0}\approx_L r^{\ga_1}
\approx_L\cdots\approx_L r^{\ga_m}.
\]

\item
Two $(\D,\eta)$-rays $r^\al$ and $r^\be$ are said to
\emph{$L$-fellow-travel} if their images
$r^\al(\N)$ and $r^\be(\N)$ have Hausdorff distance at most $L$
in $(\Gamma,d_{S})$.
\end{enumerate}
\end{definition}

We can now state the definition of meandering hyperbolicity.

\begin{definition}[Meandering hyperbolicity]\label{def}
Suppose that $\Gamma<\homM$ is expanding on $M$ with an expanding
datum $\D=(I,\U, S,\la,\de)$.

\begin{enumerate}[label=(\alph*),nosep,leftmargin=*]
\item
The $\Gamma$-action on $M$ is said to be
\emph{meandering hyperbolic} if there exist a refinement
$\D'$ of $\D$ and a constant $L\geq0$ such that, for every
$x\in M$ and every $\eta\in (0,\de]$, all rays in $\Ray_x(\D,\eta)$ are $L$-equivalent when regarded
as rays in $\Ray_x(\D',\eta)$. The triple
\[
(\D\prec\D';L)
\]
is called a \emph{meandering-hyperbolicity datum} for the
$\Gamma$-action on $M$.

\item
The $\Gamma$-action on $M$ is said to be \emph{hyperbolic} if there
exists a constant $L\ge0$ such that, for every $x\in M$, any two rays in $\Ray_x(\D,\delta)$ $L$-fellow-travel.
\end{enumerate}

\end{definition}

Replacing the fellow-traveling condition by $L$-equivalence is the key
generalization in the definition of meandering hyperbolicity. In the
proof of the main theorem, this weaker relation is nevertheless sufficient to ensure that the
limit associated with a point is independent of the chosen code.


\section{Uniform Contraction for Expanding Actions}
\label{sec:ucea}

Throughout this section, let
$\mathcal D=(I,\mathcal U,S,\lambda,\delta)$
be a fixed expanding datum for an expanding $\Ga$-action on $M$.

We establish uniform contraction estimates for the domain sequences
associated with $\D$. The principal result, Proposition~\ref{prop:ucp}, shows that
these estimates are independent of the initial point and the chosen
code; this uniformity is essential for the proof of local
$C^0$-semi-rigidity.

\subsection{Stability of expanding homeomorphisms}

We first prove that the local expanding property is stable under
small $C^0$ perturbations.
Although the expansion factor and admissible radius may have to be
slightly reduced, the expanding property itself persists uniformly.
This will be used repeatedly in Section 4.

Equip $\Homeo(M)$ with the uniform metric

\[
d_\infty(f,g)
=
\sup_{x\in M}d(f(x),g(x))
+
\sup_{x\in M}d(f^{-1}(x),g^{-1}(x)),
\]
which induces the usual $C^0$ topology.
The representation space
$\Hom(\Gamma,\Homeo(M))$
is endowed with the topology of pointwise convergence on a finite
generating set.

The following lemma is the basic perturbation estimate used throughout
the remainder of the paper.

\begin{lemma}\label{lem:sep0}
Let $f$ be a homeomorphism of a compact metric space $M$ and let
$x\in M$.
Suppose that there exist constants
$\lambda>1$
and
$\delta>0$
such that
\[
f[B(x,r)]
\supset
B(f(x),\lambda r)
\]
for every
$0<r\le\delta$.
For $\de_0\in (0,\de)$ and $\la_0\in (1,\la)$, there exist a neighborhood
$\mathcal V(\de_0,\la_0)$
of
$f$
in
$\Homeo(M)$
and a neighborhood
$U_0$
of
$x$
in
$M$
such that
\[
g[B(y,r)]
\supset
B(f(y),\lambda_0r)
\]
for every
$g\in\mathcal V(\de_0,\la_0)$,
every
$y\in U_0$,
and every
$r\in[\delta_0,\delta]$.
\end{lemma}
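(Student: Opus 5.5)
The plan is a direct triangle-inequality argument that spends the slack $(\lambda-\lambda_0)\delta_0>0$ on three small errors: moving the center from $x$ to $y$, moving the image center from $f(x)$ to $f(y)$, and replacing $f^{-1}$ by $g^{-1}$. This is exactly why the radius has to be bounded below by $\delta_0$.

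\emph{Choice of constants.} Choose $\epsilon,\tau>0$ with $\tau+\epsilon<\delta_0$ and
\[
\lambda(\tau+\epsilon)+\epsilon\le(\lambda-\lambda_0)\delta_0 .
\]
Let $U_0$ be an open neighborhood of $x$ such that $d(x,y)<\epsilon$ and $d(f(x),f(y))<\epsilon$ for all $y\in U_0$; this uses continuity of $f$. Let $\mathcal V(\delta_0,\lambda_0)$ be the set of $g$ with $d_\infty(f,g)<\tau$. In particular $\sup_z d(f^{-1}(z),g^{-1}(z))<\tau$ for such $g$, which is where the inverse term in the definition of $d_\infty$ is used.

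\emph{Step 1: an estimate for $f$ at $y$ with margin.} Fix $y\in U_0$ and $r\in[\delta_0,\delta]$, and set $s=r-\tau-\epsilon$. Then $0<s\le\delta$. Since $d(x,y)<\epsilon$, we have $B(x,s)\subset B(y,r-\tau)$. The hypothesis at $x$ gives
\[
f[B(y,r-\tau)]\supset f[B(x,s)]\supset B(f(x),\lambda s)\supset B\bigl(f(y),\lambda s-\epsilon\bigr),
\]
where the last inclusion uses $d(f(x),f(y))<\epsilon$. By the choice of constants, $\lambda s-\epsilon\ge\lambda r-(\lambda-\lambda_0)\delta_0\ge\lambda_0 r$, using $r\ge\delta_0$. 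Hence
\[
f^{-1}[B(f(y),\lambda_0 r)]\subset B(y,r-\tau).
\]

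\emph{Step 2: pass to $g$.} Take $z\in B(f(y),\lambda_0 r)$ and $g\in\mathcal V(\delta_0,\lambda_0)$. By Step 1, $f^{-1}(z)\in B(y,r-\tau)$. Since $d(g^{-1}(z),f^{-1}(z))<\tau$, it follows that $g^{-1}(z)\in B(y,r)$, that is, $z\in g[B(y,r)]$. This proves the claimed inclusion $B(f(y),\lambda_0 r)\subset g[B(y,r)]$.

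I do not expect a serious obstacle here; the argument is elementary. The only delicate point is bookkeeping: the hypothesis is given only at the single point $x$, so the estimate at $y$ has to be transported through $B(x,s)$ with $s>0$. The additive errors can only be absorbed multiplicatively because $r\ge\delta_0$, and this is the reason the conclusion is stated only for $r\in[\delta_0,\delta]$ rather than for all small $r$.
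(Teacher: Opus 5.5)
Your proof is correct, and it reaches the statement by a different route from the paper's.

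The paper argues by contradiction. It takes $g_n\to f$, $y_n\to x$, $r_n\in[\delta_0,\delta]$ and witnesses $z_n\in B(f(y_n),\lambda_0 r_n)\setminus g_n[B(y_n,r_n)]$. It then uses compactness of $M$ and of $[\delta_0,\delta]$ to get $z_n\to z$ and $r_n\to r_0\ge\delta_0$. The hypothesis is applied once, at the limiting radius $r_0$: the strict inequality $\lambda_0 r_0<\lambda r_0$ gives $z=f(w)$ with $d(w,x)<r_0$. Finally, $g_n^{-1}(z_n)\to w$ contradicts the choice of $z_n$.

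You instead spend the slack $(\lambda-\lambda_0)\delta_0$ explicitly, by applying the hypothesis at the shrunken radius $r-\tau-\epsilon$. All of your inequalities check out; your second condition on $\epsilon,\tau$ already forces $\tau+\epsilon<\delta_0$.

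What your version buys is explicit, uniform control:
\begin{itemize}
\item $\mathcal V(\delta_0,\lambda_0)$ is a $d_\infty$-ball whose radius $\tau$ depends only on $\lambda,\lambda_0,\delta_0$, not on $x$ or $f$.
\item Only the closeness of $g^{-1}$ to $f^{-1}$ is used.
\item Only $U_0$ depends on $x$, through continuity of $f$ at $x$.
\end{itemize}
In particular, for Proposition~\ref{prop:deformp} you can run your Step~1 directly at the center $y$. No recentering is needed, since $B(y,r-\tau)\subset U$, so the condition $\lambda\tau\le(\lambda-\lambda_0)\delta_0$ suffices. This gives the uniform neighborhood at once, without the compactness covering of $K$ by the sets $U_{x_i}$.

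The paper's soft argument avoids the bookkeeping of constants. However, it gives no explicit size for $\mathcal V$ or $U_0$, which is why a separate compactness step is needed there to make the estimate uniform over the expanding domain.
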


\begin{proof}
Assume the contrary. Then, for any neighborhood
$\mathcal V$ of $f$ in $\homM$ and any neighborhood $U$ of $x$ in $M$, there exist
$g\in\mathcal V, \ y\in U$, and $r\in[\de_0,\delta]$
such that
\[
g\bigl[B(y,r)\bigr]\not\supset B\bigl(f(y),\la_0 r\bigr).
\]

Thus, there exist a sequence $(g_n)$ in $\homM$ converging to $f$, a sequence $(y_n)$ in $M$ converging to $x$, a sequence $(r_n)$ in $[\de_0,\de]$ and a sequence $(z_n)$ in $M$ such that
\[
z_n\in B\bigl(f(y_n),\lambda_0 r_n\bigr)
\text{ but }
z_n\notin g_n\bigl[B(y_n,r_n)\bigr].
\]

Since both $M$ and $[\de_0,\delta]$ are compact, after passing to a subsequence we may assume that
$z_n$ converges to $z$ for some $z\in M$ and $r_n$ converges to $r_0$ for some $r_0\in [\de_0,\de]$.
Since $y_n$ converges to $x$ and $f$ is continuous, $f(y_n)$ converges to $f(x)$ by the continuity of $f$.
Thus
\[
d(z,f(x))
=
\lim_{n\to\infty} d\bigl(z_n,f(y_n)\bigr)
\le
\lim_{n\to\infty} \lambda_0 r_n
=
\lambda_0 r_0.
\]
From $\lambda_0<\lambda$, we obtain
\[
d(z,f(x))\leq \la_0 r_0< \lambda r_0,
\]
and thus
\[
z\in B\bigl(f(x),\lambda r_0\bigr).
\]

By the hypothesis that $f\bigl[B(x,r_0)\bigr]\supset B\bigl(f(x),\lambda r_0\bigr)$,
there exists $w\in B(x,r_0)$ such that $f(w)=z$.
Since $g_n$ converges to $f$ in $\homM$ and $M$ is compact, $g_n^{-1}$ uniformly converges to $f^{-1}$, which implies that 
$g_n^{-1}(z_n)$ converges to $f^{-1}(z)=w$.
Together with the fact that $y_n$ converges to $x$, this yields
\[
\lim_{n\to \infty} d\bigl(g_n^{-1}(z_n),y_n\bigr) = d(w,x)<r_0.
\]

Since $r_n$ converges to $r_0$ and the limit of $d\bigl(g_n^{-1}(z_n),y_n\bigr)$ is less than $r_0$, there exists a number $N$ such that 
\[
d\bigl(g_N^{-1}(z_N),y_N\bigr)<r_N,
\]
which implies that
\[
z_N
=
g_N\bigl(g_N^{-1}(z_N)\bigr)
\in
g_N\bigl[B(y_N,r_N)\bigr].
\]
This contradicts the choice of $z_N$ and completes the proof.
\end{proof}

The lower bound on the radius in Lemma~\ref{lem:sep0} is essential.
The following example shows that the conclusion generally fails if
arbitrarily small radii are allowed.

\begin{example}
Let
$f:\mathbb R\to\mathbb R$
be defined by
$f(x)=2x$.
Then
$f$
is
$(2,\mathbb R;\delta)$-expanding
for every
$\delta>0$.

For each
$n\in\mathbb N$,
define a homeomorphism
$f_n:\mathbb R\to\mathbb R$
by
\[
f_n(x)=
\begin{cases}
2x, & x\leq0,\\[2mm]
\dfrac{x}{2}, & 0\leq x\leq \dfrac1n,\\[2mm]
2\left(x-\dfrac1n\right)+\dfrac1{2n},
  & x\geq\dfrac1n.
\end{cases}
\]

A direct calculation shows that $f_n$ and $f_n^{-1}$ uniformly converges to $f$ and
$f^{-1}$, respectively.
However,
\[
f_n\!\left[B\!\left(0,\frac1n\right)\right]
=
\left(-\frac1n,\frac1{2n}\right)
\not\supset
\left(-\frac1n,\frac1n\right)
=
B\!\left(f(0),\frac1n\right).
\]

Consequently, for every $1<\lambda_0<2$, there is no neighborhood
$\mathcal V$ of $f$ such that
\[
g[B(0,r)]\supset B(f(0),\lambda_0r)
\]
for every $g\in \mathcal V$ and every $0<r\leq\delta$.
Thus the positive lower bound
$r\ge\delta_0$
in Lemma~\ref{lem:sep0}
is essential.
\end{example}

Lemma \ref{lem:sep0} is local because the neighborhoods depend on the base point.
Compactness yields the following uniform version on an expanding
domain.

\begin{proposition}[Uniform perturbation of expanding homeomorphisms]\label{prop:deformp}
Let
$f$
be a
$(\lambda,U;\delta)$-expanding
homeomorphism
of a compact metric space
$M$.
Let
$0<\delta_0<\delta$
and
$1<\lambda_0<\lambda$.
Then there exists a neighborhood $\mathcal V(\de_0,\la_0)$ of $f$ in $\Homeo(M)$ such that
\[
g[B(x,r)]\supset B(f(x),\lambda_0r)
\]
for every $g\in \mathcal V(\de_0,\la_0)$ whenever
$B(x,r)\subset U$ and $\delta_0\leq r\leq\delta$.
\end{proposition}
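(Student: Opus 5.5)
The plan is to argue by contradiction, repeating the compactness argument in the proof of Lemma~\ref{lem:sep0}, but letting the base point vary over all admissible centers at once. I would not simply cover $M$ by the neighborhoods $U_0$ from Lemma~\ref{lem:sep0}. That lemma needs the expansion $f[B(x,r)]\supset B(f(x),\lambda r)$ for \emph{all} $0<r\le\delta$, and a center $x$ with $B(x,\delta_0)\subset U$ need not satisfy $B(x,\delta)\subset U$. The direct sequential argument avoids this mismatch.

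Suppose no such neighborhood exists. Then there are $g_n\to f$ in $\Homeo(M)$, centers $x_n$, radii $r_n\in[\delta_0,\delta]$ with $B(x_n,r_n)\subset U$, and points
\[
z_n\in B\bigl(f(x_n),\lambda_0 r_n\bigr)\setminus g_n\bigl[B(x_n,r_n)\bigr].
\]
By compactness of $M$ and of $[\delta_0,\delta]$, I pass to a subsequence so that $x_n\to x$, $z_n\to z$ and $r_n\to r_0\in[\delta_0,\delta]$. The lower bound $\delta_0$ is what guarantees $r_0>0$. Continuity of $f$ then gives $d(z,f(x))\le\lambda_0 r_0<\lambda r_0$.

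The main obstacle is that the limiting ball $B(x,r_0)$ need not lie in $U$, so the expanding hypothesis cannot be applied to it directly. I handle this by shrinking the radius slightly. For any $r'<r_0$, if $d(y,x)<r'$ then $d(y,x_n)\le d(y,x)+d(x,x_n)<r_n$ for large $n$, because $d(x,x_n)\to0$ and $r_n\to r_0>r'$. Hence $B(x,r')\subset B(x_n,r_n)\subset U$. Since $d(z,f(x))/\lambda\le \lambda_0 r_0/\lambda<r_0$, I can choose
\[
r'\in\bigl(d(z,f(x))/\lambda,\;r_0\bigr),\qquad r'>0 .
\]
Then $0<r'\le\delta$, $B(x,r')\subset U$, and $z\in B(f(x),\lambda r')$. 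The $(\lambda,U;\delta)$-expanding property of $f$ now gives $w\in B(x,r')$ with $f(w)=z$.

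To finish, $g_n^{-1}\to f^{-1}$ uniformly because $d_\infty$ controls the inverses. So $g_n^{-1}(z_n)\to f^{-1}(z)=w$, and
\[
d\bigl(g_n^{-1}(z_n),x_n\bigr)\to d(w,x)<r'<r_0 .
\]
Since $r_n\to r_0$, for some large $N$ we get $d\bigl(g_N^{-1}(z_N),x_N\bigr)<r_N$. This means $z_N\in g_N\bigl[B(x_N,r_N)\bigr]$, contradicting the choice of $z_N$. Therefore some neighborhood $\mathcal V(\delta_0,\lambda_0)$ of $f$ has the stated property, uniformly over all admissible $(x,r)$.
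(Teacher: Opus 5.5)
Your proof is correct, but it is organized differently from the paper's. The paper globalizes the pointwise Lemma~\ref{lem:sep0} as follows:
\begin{itemize}
\item it sets $\delta_x=\sup\{r>0: B(x,r)\subset U\}=d(x,M\setminus U)$ and $\bar\delta_x=\min\{\delta,\delta_x\}$;
\item it applies Lemma~\ref{lem:sep0} at each point of the compact set $K=\{x:\delta_x\ge\delta_0\}$ with radius range $[\delta_0,\bar\delta_x]$, which is how it avoids the mismatch you point out;
\item it covers $K$ by finitely many of the resulting $U_{x_i}$ and intersects the corresponding $\mathcal V_{x_i}$.
\end{itemize}
You instead run the contradiction argument once, letting center, radius and target point vary simultaneously. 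Your extra step, choosing $r'\in(d(z,f(x))/\lambda,\,r_0)$ and observing $B(x,r')\subset B(x_n,r_n)\subset U$ for large $n$, is exactly what removes the need for the limit ball $B(x,r_0)$ to lie in $U$.

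The paper's route is modular: the local lemma, together with the example showing the lower bound $\delta_0$ is needed, stands on its own. Yours treats all admissible pairs $(x,r)$ uniformly, and that is a genuine advantage. In the paper's final step, a center $y\in U_{x_i}$ with $B(y,r)\subset U$ is only known to satisfy $r\le\bar\delta_y$, and $\bar\delta_y$ can exceed $\bar\delta_{x_i}$. So Lemma~\ref{lem:sep0} at $x_i$ does not literally cover such $r$, and an additional slack argument is needed there. For instance, one can apply the lemma with some $\lambda_0'\in(\lambda_0,\lambda)$ and shrink $U_{x_i}$ to a ball of radius $\varepsilon$ with $\lambda_0\varepsilon\le(\lambda_0'-\lambda_0)\delta_0$. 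Your direct argument needs nothing of this kind.
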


\begin{proof}
For each
$x\in U$,
define
\[
\delta_x
=
\sup
\{
r>0 :
B(x,r)\subset U
\} \text{ and }\bar \de_x
=
\min\left\{
\delta,\,
\de_x
\right\}.
\]
Let
$
K
=
\{
x\in U:
\delta_x\ge\delta_0
\}=\{x\in M :  d(x,M\setminus U)\geq\delta_0\}.
$
Since the function $x \mapsto d(x,M\setminus U)$ is continuous, $K$ is closed in $M$, and hence compact.
For every
$x\in K$,
Lemma~\ref{lem:sep0}
provides
a neighborhood
$\mathcal V_x$
of
$f$
in
$\Homeo(M)$
and
a neighborhood
$U_x$
of
$x$
such that
\[
g[B(y,r)]
\supset
B(f(y),\lambda_0r)
\]
for every
$g\in\mathcal V_x$,
every
$y\in U_x$,
and every
$r\in[\delta_0,\bar\de_x]$.
Since
$K$
is compact,
there exist
$x_1,\dots,x_N \in K$
such that
\[
K
\subset
U_{x_1}\cup\cdots\cup U_{x_N}.
\]

Set $\mathcal V
=
\mathcal V_{x_1}
\cap\cdots\cap
\mathcal V_{x_N}.
$
Let
$g\in\mathcal V$
and suppose that
$B(y,r)\subset U$
with
$\delta_0\le r\le\delta$.
Then
$\de_0\le r\le\delta_y$,
so
$y\in K$.
Hence
$y\in U_{x_i}$
for some
$i\in \{1,\ldots,N\}$.
Since
$g\in\mathcal V_{x_i}$,
Lemma~\ref{lem:sep0}
implies that
\[
g[B(y,r)]
\supset
B(f(y),\lambda_0r),
\]
which proves the proposition.
\end{proof}
The proposition above provides a uniform perturbation estimate on every
expanding domain.
In the next subsection, we apply this result to the domain sequences
associated with an expanding datum and establish their uniform
contraction properties.

\subsection{ Uniform contraction of domain sequences}
We now combine the local expansion estimates along codes to obtain
uniform contraction properties for domain sequences. We begin with
their nestedness.

\begin{lemma}\label{lem:insp}
Suppose that $\Gamma<\homM$ is expanding on $M$. Let $\D=(I,\U, S,\la,\de)$ be an expanding datum for the $\Gamma$-action on $M$.
Let $0<\eta\leq\delta$, and let $\alpha$ be a
$(\mathcal D,\eta)$-code for $x\in M$. Then the $\alpha$-domain sequence $(D_k^\alpha)_{k\geq0}$ is nested.
More precisely, for every $k\geq0$,
\begin{itemize}
\item[\textup{(i)}] $D_{k}^{\al}\supset D_{k+1}^\al$,
\item[\textup{(ii)}] $D^\al_k \subset s_{\al(0)} \left[B\left(x_1^\al, \frac{\eta}{\la^k}\right)\right]$.
\end{itemize}
\end{lemma}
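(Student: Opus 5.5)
The plan is to obtain both statements from one local contraction step, using the code condition (iii) of a $(\mathcal D,\eta)$-code together with the expanding property (v) of the datum. The key observation is that for every $j\ge1$ and every $0<\rho\le\eta$,
\[
s_{\alpha(j)}\bigl[B(x_{j+1}^\alpha,\rho)\bigr]\subset B\bigl(x_j^\alpha,\rho/\lambda\bigr).
\]
To prove this, recall that $x_{j+1}^\alpha=s_{\alpha(j)}^{-1}(x_j^\alpha)$. Since $j\ge1$, condition (iii) gives
\[
B(x_j^\alpha,\rho/\lambda)\subset B(x_j^\alpha,\eta)\subset U_{\alpha(j)}.
\]
Moreover $\rho/\lambda<\eta\le\delta$. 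By (v), $s_{\alpha(j)}^{-1}$ is $(\lambda,U_{\alpha(j)};\delta)$-expanding, and applying the definition with radius $\rho/\lambda$ yields
\[
s_{\alpha(j)}^{-1}\bigl[B(x_j^\alpha,\rho/\lambda)\bigr]\supset B(x_{j+1}^\alpha,\rho).
\]
Applying $s_{\alpha(j)}$ to both sides gives the claim. Equivalently, this is Lemma~\ref{lem:con} applied to $f=s_{\alpha(j)}^{-1}$ and $y=x_{j+1}^\alpha$. Note that only indices $j\ge1$ are used, so the step works for codes that are not special, and $\alpha(0)$ never needs an expanding domain.

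For (i), write $r^\alpha_{k+1}=r^\alpha_k\,s_{\alpha(k+1)}$. Then
\[
D^\alpha_{k+1}=r^\alpha_k\Bigl[s_{\alpha(k+1)}\bigl[B(x_{k+2}^\alpha,\eta)\bigr]\Bigr].
\]
The key step with $j=k+1\ge1$ and $\rho=\eta$ gives
\[
s_{\alpha(k+1)}\bigl[B(x_{k+2}^\alpha,\eta)\bigr]\subset B(x_{k+1}^\alpha,\eta/\lambda)\subset B(x_{k+1}^\alpha,\eta).
\]
Applying the homeomorphism $r^\alpha_k$ yields $D^\alpha_{k+1}\subset D^\alpha_k$.

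For (ii), write $D_k^\alpha=s_{\alpha(0)}\bigl[E_k\bigr]$, where
\[
E_k=s_{\alpha(1)}\cdots s_{\alpha(k)}\bigl[B(x_{k+1}^\alpha,\eta)\bigr],
\]
with $E_0=B(x_1^\alpha,\eta)$. Then (ii) is equivalent to $E_k\subset B(x_1^\alpha,\eta/\lambda^k)$. To prove this, apply the key step successively for $j=k,k-1,\dots,1$ with radii $\rho=\eta,\eta/\lambda,\dots,\eta/\lambda^{k-1}$. All of these radii are at most $\eta$, so the step applies each time, and each application divides the radius by $\lambda$. This is a short downward induction on $j$. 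There is no real obstacle. The only points to check are that each radius stays $\le\eta\le\delta$ and that the shrunken ball remains inside $U_{\alpha(j)}$; both follow from (iii), which is why $j\ge1$ matters.
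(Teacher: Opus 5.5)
Your proof is correct and follows essentially the same route as the paper. Both arguments rest on the one-step contraction $s_{\alpha(j)}[B(x_{j+1}^\alpha,\rho)]\subset B(x_j^\alpha,\rho/\lambda)$ for $j\ge1$, which is the paper's application of Lemma~\ref{lem:con} (valid there for radii up to $\lambda\eta$), iterated from $j=k$ down to $j=1$ to get (ii); the only cosmetic difference is that for (i) the paper applies the expanding property directly at radius $\eta$ to obtain $s_{\alpha(k+1)}^{-1}[B(x_{k+1}^\alpha,\eta)]\supset B(x_{k+2}^\alpha,\lambda\eta)$, whereas you derive (i) from the same contraction step used for (ii).
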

\begin{proof}
We first prove~\textup{(i)}. For every $k\ge0$, the map
$s_{\alpha(k+1)}^{-1}$ is
$(\lambda,U_{\alpha(k+1)};\delta)$-expanding, and
\[
B(x_{k+1}^\alpha,\eta)\subset U_{\alpha(k+1)}.
\]
Hence
\[
s_{\alpha(k+1)}^{-1}
\bigl[B(x_{k+1}^\alpha,\eta)\bigr]
\supset
B\bigl(
s_{\alpha(k+1)}^{-1}(x_{k+1}^\alpha),
\lambda\eta
\bigr)
=
B(x_{k+2}^\alpha,\lambda\eta).
\]
In particular,
\[
B(x_{k+2}^\alpha,\eta)
\subset
s_{\alpha(k+1)}^{-1}
\bigl[B(x_{k+1}^\alpha,\eta)\bigr].
\]
Applying the homeomorphism $r^\alpha_{k+1}$ to this inclusion gives
\[
\begin{aligned}
D_{k+1}^\alpha
=
r_{k+1}^\alpha
\bigl[B(x_{k+2}^\alpha,\eta)\bigr]
\subset
r_{k+1}^\alpha
s_{\alpha(k+1)}^{-1}
\bigl[B(x_{k+1}^\alpha,\eta)\bigr]
=
r_k^\alpha
\bigl[B(x_{k+1}^\alpha,\eta)\bigr]
=
D_k^\alpha.
\end{aligned}
\]
Thus $(D_k^\alpha)_{k\ge0}$ is nested.

We now prove~\textup{(ii)}. Since
$s_{\alpha(j)}^{-1}$ is
$(\lambda,U_{\alpha(j)};\delta)$-expanding and $B(s_{\al(j)}(x_{j+1}^\al), \eta)=B(x_{j+1}^\al, \eta)$ is contained in $U_{\al(j)}$,
Lemma \ref{lem:con} gives that 
for all $0<r\le\lambda\eta$,
\[
\begin{aligned}
s_{\alpha(j)}
\bigl[B(x_{j+1}^\alpha,r)\bigr]
\subset
B\bigl(
s_{\alpha(j)}(x_{j+1}^\alpha),
\lambda^{-1}r
\bigr)
=
B(x_j^\alpha,\lambda^{-1}r).
\end{aligned}
\]

We apply this contraction estimate successively along the ray. Since all the radii
$
\eta,\lambda^{-1}\eta,\ldots,\lambda^{-k+1}\eta
$
lie in $(0,\lambda\eta]$ for $k\ge 1$,
\begin{align*}
D^\alpha_k
&= s_{\alpha(0)}\cdots s_{\alpha(k)}[B(x_{k+1}^\alpha,\eta)] \\
&\subset s_{\alpha(0)}\cdots s_{\alpha(k-1)}
   [B(x_{k}^\alpha,\lambda^{-1}\eta)] \\
&\subset s_{\alpha(0)}\cdots s_{\alpha(k-2)}
   [B(x_{k-1}^\alpha,\lambda^{-2}\eta)] \\
&\phantom{\subset}\vdots \hspace{1.8cm} \vdots \\
&\subset s_{\alpha(0)}
   [B(x_{1}^\alpha,\lambda^{-k}\eta)],
\end{align*}
which is the desired estimate.
\end{proof}

The closures $\cl(D_k^\alpha)$ form a decreasing sequence of compact
sets containing $x$.
However, this qualitative property alone is insufficient for our
purposes.
To construct the semi-conjugacy, we must prove that these domains
shrink uniformly, independently of the choice of code.

Recall that $\al(0)$ is chosen arbitrarily. Hence $s_{\al(0)}$ may not be $\la^{-1}$-contracting on $B(x_{1}^\alpha,\lambda^{-k}\eta)$. 
Moreover, no uniform Lipschitz bound for
$s_{\alpha(0)}$ is assumed. Consequently, the estimate in Lemma~3.4 does not directly imply
\[
D_k^\alpha\subset B(x,\lambda^{-k-1}\eta).
\] However, the uniform continuity of $s_{\alpha(0)}$ provides uniform
control of the diameter of $s_{\alpha(0)}[B(x_{1}^\alpha,\lambda^{-k}\eta)]$. The following corollary records the most immediate consequence of
Lemma~\ref{lem:insp}.

\begin{corollary}\label{cor:nsp}
Let $\Gamma<\Homeo(M)$ be expanding on a compact metric space $M$
with an expanding datum
\[
\D=(I,\U,S,\lambda,\delta),
\]
and fix $0<\eta\le\delta$. Then, for every $k\geq 1$, there exists
$n_k\in\N$ such that, for every $x\in M$ and every
$(\D,\eta)$-code $\alpha$ for $x$,
\[
\diam(D_{n_k}^\alpha)\le \frac1k.
\]
Consequently, 
\[
\{x\}=\bigcap_{k=0}^\infty \cl(D^\al_k).
\]
In addition, for every pair of distinct points $x, y\in M$, there is an element $\gamma\in \Gamma$ such that \[d(\gamma(x),\gamma(y))\ge \de.\]
\end{corollary}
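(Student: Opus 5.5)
The plan is to combine Lemma~\ref{lem:insp}(ii) with the uniform continuity of the finitely many generators. Since $M$ is compact and $S$ is finite, each $s_i\in S$ is uniformly continuous. Given $k\ge1$, we can therefore choose $\epsilon_k>0$ such that, for every $i\in I$ and all $y,z\in M$ with $d(y,z)<\epsilon_k$, we have $d(s_i(y),s_i(z))<\frac1{2k}$. Next we choose $n_k$ with $\eta\lambda^{-n_k}\le\epsilon_k$; this uses $\lambda>1$. Lemma~\ref{lem:insp}(ii) then gives
\[
D^\alpha_{n_k}\subset s_{\alpha(0)}\bigl[B(x_1^\alpha,\lambda^{-n_k}\eta)\bigr]\subset B\bigl(s_{\alpha(0)}(x_1^\alpha),\tfrac1{2k}\bigr)=B\bigl(x,\tfrac1{2k}\bigr),
\]
so $\diam(D^\alpha_{n_k})\le\frac1k$. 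The choice of $n_k$ depends only on $\mathcal D$, $\eta$ and $k$, not on $x$ or $\alpha$, which is the required uniformity. The only subtle point is that $s_{\alpha(0)}$ need not contract; uniform continuity on the compact space $M$, taken over the finite set $S$, handles exactly this.

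For the intersection statement, recall that $x\in D_k^\alpha$ for every $k$, so $x\in\bigcap_k\cl(D^\alpha_k)$. By Lemma~\ref{lem:insp}(i) the sets $\cl(D^\alpha_k)$ are nested. Moreover, $\diam\cl(D^\alpha_{n_k})\le\frac1k$ for every $k$, so the intersection has diameter $0$ and equals $\{x\}$.

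For the separation statement, let $x\neq y$ and suppose, for contradiction, that $d(\gamma(x),\gamma(y))<\delta$ for every $\gamma\in\Gamma$. Fix a $(\mathcal D,\delta)$-code $\alpha$ for $x$ (take $\eta=\delta$). We show by induction on $k$ that $y_k:=(r^\alpha_{k-1})^{-1}(y)\in B(x^\alpha_k,\delta)$ for all $k\ge1$; then $y=r^\alpha_{k-1}(y_k)\in D^\alpha_{k-1}$ for all $k$, contradicting $\bigcap_k\cl(D^\alpha_k)=\{x\}$.

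The base case follows from the hypothesis with $\gamma=(r_0^\alpha)^{-1}=s_{\alpha(0)}^{-1}$, since $x^\alpha_1=s_{\alpha(0)}^{-1}(x)$. For the inductive step, assume $y_k\in B(x^\alpha_k,\delta)\subset U_{\alpha(k)}$. Then $y_{k+1}=s_{\alpha(k)}^{-1}(y_k)$ and $x^\alpha_{k+1}=s_{\alpha(k)}^{-1}(x^\alpha_k)$, and applying the hypothesis with $\gamma=(r^\alpha_k)^{-1}$ gives $d(y_{k+1},x^\alpha_{k+1})<\delta$. In fact the hypothesis alone yields the bound at every $k$, since $y_k=(r^\alpha_{k-1})^{-1}(y)$ and $x_k^\alpha=(r^\alpha_{k-1})^{-1}(x)$ directly. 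So the induction is really just the direct application at each $k$, and the argument needs only the nested-intersection statement just proved. We conclude that some $\gamma$ satisfies $d(\gamma(x),\gamma(y))\ge\delta$.
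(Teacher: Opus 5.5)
Your proposal is correct and follows essentially the same route as the paper. The first part is the paper's argument: uniform continuity of the finitely many generators, combined with Lemma~\ref{lem:insp}(ii), gives an $n_k$ independent of $x$ and $\alpha$. The separation claim is the paper's argument phrased by contradiction; since $x_k^\alpha=(r^\alpha_{k-1})^{-1}(x)$, the hypothesis directly puts $y$ in every $D^\alpha_{k-1}$, so the induction framing is unnecessary, as you yourself note. You also spell out the intersection statement, which the paper leaves implicit.
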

\begin{proof}
By the compactness of $M$, every $s_i$ is uniformly continuous. Hence for each $k\in \N$, there exists a constant $\de_{k,i}>0$ such that 
for all $x\in M$, \[ s_i[B(x,\de_{k,i})]\subset B(s_i(x),(2k)^{-1}).\]
Setting $\de_k=\min_{i\in I}\de_{k,i}$, for all $s_i\in S$ and all $x\in M$, the following holds.
\[ s_i[B(x,\de_k)]\subset B(s_i(x),(2k)^{-1}).\]
For each $k\in \N$, choose a natural number $n_k\in \N$ such that $\la^{-n_k}\eta <\de_k$. 
Lemma \ref{lem:insp} and the choice of $\delta_k$ give
\begin{align*}
\diam(D^\al_{n_k}) &\leq \diam(s_{\al(0)}[B(x_1^\al, \la^{-n_k}\eta)]) \\
 &\leq \diam(s_{\al(0)}[B(x_1^\al, \de_k)]) \\
&\leq \diam(B(s_{\al(0)}(x_1^\al), (2k)^{-1})) \\
&= \diam(B(x, (2k)^{-1}))\\
&\leq k^{-1},
\end{align*} 
which proves the first assertion.

Let $x, y \in M$ be distinct. Consider a $(\D,\de)$-code $\al$ for $x$. Since $\diam(D_k^\alpha)$ converges to $0$ and $x\in D_k^\alpha$ for every $k$,
there exists $k_0$ such that 
\[ y\notin D^\al_{k_0} \textup{ and thus } (r_{k_0}^\al)^{-1}(y)\notin B(x^\al_{k_0+1},\de) =  B((r^\al_{k_0})^{-1}(x),\de).\]
Thus we obtain $d((r^\al_{k_0})^{-1}(x),(r^\al_{k_0})^{-1}(y))\geq \de$ as desired.
\end{proof}

The preceding corollary shows that every domain sequence converges
uniformly to its base point. The following proposition gives the uniform inclusion estimate needed
later in the perturbation argument. It is independent of both the
initial point and the chosen code. This estimate is the key dynamical ingredient in the
proof of the local $C^0$-semi-rigidity theorem.

\begin{proposition}[Uniform contraction property]\label{prop:ucp} Let $\Gamma < \operatorname{Homeo}(M)$ be expanding on a compact metric space $M$ with an expanding datum $\D=(I,\U, S,\la,\de)$. Let $0<\eta\le\de$. Then, for every nonempty finite subset $F\subset \homM$, there exists $N\in \N$ such that for every $x\in M$, every $(\D,\eta)$-code $\alpha$ for $x$, and every $f\in F$, \[D_N^\al= r^\alpha_{N} [B(x^\alpha_{N+1},\eta)]\subset f[B(f^{-1}(x),\eta)].\]
The integer $N$ depends only on $\D$, $\eta$, and the finite set $F$,
and is independent of $x$ and $\alpha$.
\end{proposition}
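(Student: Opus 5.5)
Since $F$ is finite and the claim for $F$ follows from the claims for each $f\in F$ (take the maximum of the resulting $N$'s, using that the domain sequence is nested by Lemma~\ref{lem:insp}(i)), we may fix a single $f\in\Homeo(M)$. The idea is to reduce the inclusion to a diameter estimate. We need a radius $\epsilon>0$, depending only on $f$ and $\eta$, such that
\[
B(x,\epsilon)\subset f\bigl[B(f^{-1}(x),\eta)\bigr]\quad\text{for every }x\in M.
\]
Then Corollary~\ref{cor:nsp} finishes the proof. Pick $k$ with $1/k<\epsilon$ and set $N=n_k$. For every $x$ and every $(\D,\eta)$-code $\alpha$ for $x$, we have $x\in D^\alpha_N$ and $\diam(D^\alpha_N)\le 1/k<\epsilon$. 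Hence $D^\alpha_N\subset B(x,\epsilon)\subset f[B(f^{-1}(x),\eta)]$. This $N$ depends only on $\D$, $\eta$ and $f$, because $n_k$ in Corollary~\ref{cor:nsp} is uniform in $x$ and $\alpha$.

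To get $\epsilon$, we use uniform continuity of $f^{-1}$ on the compact space $M$. There is $\epsilon>0$ such that $d(z,w)<\epsilon$ implies $d(f^{-1}(z),f^{-1}(w))<\eta$. Now let $z\in B(x,\epsilon)$. Then $f^{-1}(z)\in B(f^{-1}(x),\eta)$, so $z=f(f^{-1}(z))\in f[B(f^{-1}(x),\eta)]$. This is the required uniform inclusion. It depends only on $f$ and $\eta$, and it uses no expansion at all.

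The only delicate point is uniformity in $x$ and in the code $\alpha$. Both the diameter bound and the radius $\epsilon$ must be independent of the base point. This is exactly what Corollary~\ref{cor:nsp} provides for the diameters, via the uniform continuity of the finitely many generators $s_i$. Compactness of $M$, through uniform continuity of $f^{-1}$, provides it for $\epsilon$. Neither step needs more than these two facts, so no real obstacle remains. For the finite set $F$, one can also just take $\epsilon=\min_{f\in F}\epsilon_f$ and choose a single $k$.
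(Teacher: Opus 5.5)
Your proposal is correct and follows the paper's proof essentially verbatim: uniform continuity of $f^{-1}$ gives a radius $t>0$, independent of $x$, with $B(x,t)\subset f[B(f^{-1}(x),\eta)]$. Corollary~\ref{cor:nsp} gives $\diam(D^\alpha_{N_f})<t$ uniformly in $x$ and $\alpha$, and nestedness lets you pass to $N=\max_{f\in F}N_f$; your alternative of using the single radius $\min_{f\in F}\epsilon_f$ is an equally valid minor variant.
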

\begin{proof}

Fix $\eta\in(0,\delta]$. We first prove the assertion when
$F=\{f\}$ is a singleton.

Since $M$ is compact, the inverse map $f^{-1}$ is uniformly continuous.
Hence there exists $t>0$ such that, for every $x\in M$,
\[
B(x,t)
\subset
f\bigl[B(f^{-1}(x),\eta)\bigr].
\]

By Corollary~\ref{cor:nsp}, there exists $N_f\in\mathbb N$ such that,
for every $x\in M$ and every $(\mathcal D,\eta)$-code $\alpha$ for $x$,
\[
\operatorname{diam}(D_{N_f}^\alpha)<t.
\]
Since $x\in D_{N_f}^\alpha$, it follows that
\[
D_{N_f}^\alpha
\subset
B(x,t)
\subset
f\bigl[B(f^{-1}(x),\eta)\bigr].
\]
This proves the assertion when $F$ is a singleton.

Now let $F\subset\Homeo(M)$ be finite. For each $f\in F$, let $N_f$
be as above, and set
\[
N=\max_{f\in F}N_f.
\]
Since every domain sequence is nested, we have
\[
D_N^\alpha\subset D_{N_f}^\alpha
\]
for every $f\in F$. Therefore,
\[
D_N^\alpha
\subset
D_{N_f}^\alpha
\subset
f\bigl[B(f^{-1}(x),\eta)\bigr]
\]
for every $f\in F$, every $x\in M$, and every
$(\mathcal D,\eta)$-code $\alpha$ for $x$.
This completes the proof.
\end{proof}

The proposition provides a form of uniform contraction that is
compatible with composition by any prescribed finite family of
homeomorphisms.

\subsection{Uniform Contraction Along Nearby Rays}


We next compare domain sequences arising from different codes. The
following lemma shows that uniformly close points on two rays yield
nested domain sets after a uniformly bounded shift.

\begin{lemma}[Uniform contraction along nearby rays]\label{lem:orucp}
 Let $\Gamma < \operatorname{Homeo}(M)$ be a finitely generated subgroup with an expanding datum $\D=(I,\U, S,\la,\de)$. 
Then, for every $L\geq 0$ and $0<\eta\le \de$, there exists $N\in \mathbb N$ such that if $d_S(r^\al_i, r^\be_j)\leq L$ and $\al, \be$ are $(\D,\eta)$-codes with the same initial point, then $D^\al_{i+N}
\subset 
D^\be_j$.
\end{lemma}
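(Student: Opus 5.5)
The plan is to reduce the statement to Proposition~\ref{prop:ucp} by rewriting both domain sets in the coordinates of the point $x_{j+1}^\beta$. Let $x$ be the common initial point. Since $r^\alpha_k(x^\alpha_{k+1})=x$ and $r^\beta_j(x^\beta_{j+1})=x$, we set
\[
g=(r^\beta_j)^{-1}r^\alpha_i\in\Gamma .
\]
Then $|g|_S=d_S(r^\beta_j,r^\alpha_i)\le L$, and $g(x^\alpha_{i+1})=x^\beta_{j+1}$. Consider the finite set
\[
F=\{\,g^{-1}\in\Gamma : |g|_S\le L\,\}\subset\Homeo(M).
\]
It depends only on $S$ and $L$. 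Let $N$ be the integer that Proposition~\ref{prop:ucp} attaches to $\mathcal D$, $\eta$ and $F$. I will show that this $N$ works.

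The key observation is that the shifted sequence $\alpha'(k)=\alpha(k+i+1)$ is a $(\mathcal D,\eta)$-code for the point $x'=x^\alpha_{i+1}$. Indeed, its orbit is $x'_k=x^\alpha_{k+i+1}$, and condition (iii) holds for all $k\ge1$ because it holds for $\alpha$. The code $\alpha'$ is in fact special, though this is not needed. Its ray satisfies $r^{\alpha'}_k=s_{\alpha(i+1)}\cdots s_{\alpha(i+k+1)}$, so that $r^\alpha_{i+k+1}=r^\alpha_i\, r^{\alpha'}_k$. Consequently
\[
D^\alpha_{i+N+1}=r^\alpha_i\bigl[D^{\alpha'}_N\bigr].
\]
Apply Proposition~\ref{prop:ucp} to $x'$, the code $\alpha'$, and $f=g^{-1}\in F$:
\[
D^{\alpha'}_N\subset g^{-1}\bigl[B(g(x'),\eta)\bigr]=g^{-1}\bigl[B(x^\beta_{j+1},\eta)\bigr].
\]
Applying $r^\alpha_i$ and using $r^\alpha_i g^{-1}=r^\beta_j$, we obtain
\[
D^\alpha_{i+N+1}\subset r^\beta_j\bigl[B(x^\beta_{j+1},\eta)\bigr]=D^\beta_j .
\]
Replacing $N$ by $N+1$ then gives the statement as written. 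Alternatively, nestedness from Lemma~\ref{lem:insp}(i) lets one use any larger index.

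The only delicate point is verifying that the shifted sequence is a genuine code for $x^\alpha_{i+1}$ and that the ray factorization is exact. One must also check that Proposition~\ref{prop:ucp}'s $N$ is uniform over all base points and codes, which is precisely what that proposition asserts. Finiteness of $F$ comes from the finiteness of balls in the word metric, so $N$ depends only on $\mathcal D$, $\eta$ and $L$.
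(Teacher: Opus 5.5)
Your proposal is correct and is essentially the paper's proof. Both arguments use the element $(r^\alpha_i)^{-1}r^\beta_j$ from the finite word-ball $W_L$, shift $\alpha$ to a code for $x^\alpha_{i+1}$, apply Proposition~\ref{prop:ucp}, and pull back via $r^\alpha_i r^{\alpha'}_k=r^\alpha_{i+k+1}$; the only difference is the index bookkeeping, since the paper applies the proposition at index $N-1$ where you replace $N$ by $N+1$.
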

\begin{proof}
Let $\al$ and $\be$ be $(\D,\eta)$-codes for $x\in M$. 
Fix
$L\ge0$,
and let
\[
W_L
=
\{
g\in\Gamma:
|g|_{S}\le L
\}.
\]
Since
$\Gamma$
is finitely generated,
the set
$W_L$
is finite.

Suppose that
$
d_S
(r_i^\alpha,
r_j^\beta)
\le L.
$
Then there exists
$f\in W_L$
such that
$
(r_i^\alpha)^{-1}
r_j^\beta
=
f.
$
By definition, $$f(x^\be_{j+1})=(r^\al_i)^{-1}r^\be_j(x^\be_{j+1})=(r^\al_i)^{-1}(x)=x^\al_{i+1}.$$
Define the shifted code $\gamma$ by
\[
\gamma(n)=\alpha(i+1+n).
\]
Then $\gamma$ is a $(\mathcal D,\eta)$-code for
$x_{i+1}^\alpha$.
Applying Proposition \ref{prop:ucp} to the finite set $W_L$, choose
$N=N(\mathcal D,\eta,L)\geq 1$ such that
 \[ r^{\gamma}_{N-1} [B(x^\gamma_{N},\eta)]\subset f[B(f^{-1}(x^\al_{i+1}),\eta)]=(r^\al_i)^{-1}r^\be_j[B(x^\be_{j+1},\eta)]\]
and thus 
 \[ r^\al_i r^{\gamma}_{N-1} [B(x^\gamma_{N},\eta)]\subset r^\be_j[B(x^\be_{j+1},\eta)].\]
By the definition of the shifted code,
\[
r_i^\alpha r_{N-1}^\gamma=r_{i+N}^\alpha
\quad\text{and}\quad
x_N^\gamma=x_{i+N+1}^\alpha.
\]Therefore,
\[
D_{i+N}^\alpha\subset D_j^\beta,
\]
as required.
\end{proof}

Section ~3 establishes that domain sequences
associated with an expanding datum satisfy strong uniform contraction
properties.
These estimates depend only on the expanding datum and are
independent of the particular choice of code.

\section{Perturbations of Expanding Data}
In the previous section, we obtained uniform contraction estimates for
domain sequences associated with an expanding datum. We now show that
suitable versions of these estimates persist under small $C^0$
perturbations.

The main difficulty is that the local expansion radius is not
stable under perturbations.
To overcome this difficulty, we introduce a global quantity that provides a uniform lower
bound for the expansion radius and behaves continuously with respect to
the $C^0$ topology.

\subsection{A Uniform Expansion Radius}
Let $f\in \homM$ and fix $\eta>0$.
For $x\in M$, define the pointwise expansion radius
\[
R_\eta^f(x)
=
\sup\{r>0 \mid B(x,r)\subset f[B(f^{-1}(x),\eta)]\}.
\]
Uniform continuity of $f^{-1}$ guarantees the existence of a constant $t>0$, independent of $x\in M$, such that 
$$B(x,t)\subset f[B(f^{-1}(x),\eta)],$$ for all $x\in M$ and thus $$ \inf_{x\in M}R_\eta^f(x)>0.$$
One may ask whether $R_\eta^f$ is continuous. The following example shows that $R_\eta^f$ need not be continuous.

\begin{example}[Discontinuity of the pointwise expansion radius]
Let
\[
M=\{0\}\cup \left\{\frac1n  : n\ge 1\right\}\cup\{-2\}\subset\mathbb R
\]
with the induced metric, $f=\mathrm{id}_M$, and $\eta=1$.
Then
\[
R_1^{\mathrm{id}_M}(x)=\sup\{r>0  : B(x,r)\subset B(x,1)\}.
\]

For $x_n=1/n$ with $n\geq2$,
\[
B(x_n,1)=M\setminus\{-2\},
\]
and hence
$
R_1^{\id_M}(x_n)=d(x_n,-2)=2+\frac1n$ converges to $2$.
On the other hand,
\[
B(0,1)=\{0\}\cup\left\{\frac1n : n\geq2\right\},
\]
so \(R_1^{\id_M}(0)=1\). Since \(x_n\) converges to $0$, the function
\(R_1^{\id_M}\) is not continuous at \(0\).
\end{example}

The previous example shows that
the pointwise expansion radius $R_\eta^f$
is generally not continuous.
Therefore it is not suitable for perturbation arguments.
Instead of considering the pointwise expansion radius,
we introduce a uniform expansion radius obtained by taking the largest
radius that works simultaneously at every point.

\begin{proposition}\label{prop:injr}
Let $(M,d)$ be a compact metric space and let
$0<\eta\leq\diam M$.
For each $f\in \homM$, define
\[
T_\eta(f)
=
\sup\Bigl\{
t>0:
B(x,t)\subset f\bigl[B(f^{-1}(x),\eta)\bigr]
\text{ for all }x\in M
\Bigr\}.
\]
Then the map $T_\eta:\homM \to (0,\infty)$ is continuous with respect to  the $C^0$ topology on
$\Homeo(M)$.
\end{proposition}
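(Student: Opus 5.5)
The plan is to replace the supremum defining $T_\eta(f)$ by an explicit minimum over a fixed compact set that does not depend on $f$. Continuity will then follow from a Lipschitz estimate in the uniform metric $d_\infty$.

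\emph{Step 1 (reformulation).} Set $h=f^{-1}$. Since $f$ is a bijection,
\[
B(x,t)\subset f\bigl[B(f^{-1}(x),\eta)\bigr]
\iff
f^{-1}[B(x,t)]\subset B(f^{-1}(x),\eta).
\]
So $t$ is admissible exactly when $d(x,y)<t$ implies $d(h(x),h(y))<\eta$ for all $x,y\in M$. Let
\[
K=\{(u,v)\in M\times M : d(u,v)\ge\eta\}.
\]
This set is closed in $M\times M$, hence compact. It is nonempty: $M$ is compact, so its diameter is attained by some pair, and that pair is at distance $\diam M\ge\eta$. Define
\[
m(f)=\min_{(u,v)\in K} d(f(u),f(v)).
\]
The minimum exists by compactness of $K$ and continuity of $f$. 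Moreover $m(f)>0$, because $f$ is injective and $u\neq v$ on $K$.

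\emph{Step 2 (identify $T_\eta(f)=m(f)$).} The substitution $x=f(u)$, $y=f(v)$ shows that $m(f)$ is the infimum of $d(x,y)$ over all pairs with $d(h(x),h(y))\ge\eta$.
\begin{itemize}
\item If $t\le m(f)$, then $t$ is admissible. Indeed, suppose $d(x,y)<t$ but $d(h(x),h(y))\ge\eta$. Then $d(x,y)\ge m(f)\ge t$, a contradiction.
\item If $t>m(f)$, then $t$ is not admissible. Take $(u,v)\in K$ realizing the minimum and put $x=f(u)$. The point $y=f(v)$ lies in $B(x,t)$, but $d(f^{-1}(x),f^{-1}(y))=d(u,v)\ge\eta$, so $y\notin f[B(f^{-1}(x),\eta)]$.
\end{itemize}
Hence $T_\eta(f)=m(f)\in(0,\infty)$. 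In particular $T_\eta$ indeed takes values in $(0,\infty)$.

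\emph{Step 3 (Lipschitz estimate).} Let $f,g\in\Homeo(M)$. For every $(u,v)\in K$, the triangle inequality gives
\[
d(g(u),g(v))\le d(f(u),f(v))+2\sup_{z\in M}d(f(z),g(z))\le d(f(u),f(v))+2\,d_\infty(f,g).
\]
Taking the minimum over $K$ on both sides, and then exchanging the roles of $f$ and $g$, yields
\[
|T_\eta(f)-T_\eta(g)|\le 2\,d_\infty(f,g).
\]
So $T_\eta$ is $2$-Lipschitz, and in particular continuous in the $C^0$ topology.

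The only delicate point is Step 2. One must make sure the supremum is genuinely attained at $m(f)$ and is not shifted by the strict-versus-non-strict inequalities in the open balls. The argument above handles both directions explicitly. The hypothesis $\eta\le\diam M$ is used only to guarantee $K\neq\emptyset$, so that $T_\eta(f)<\infty$.
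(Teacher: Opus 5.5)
Your proof is correct and follows the paper's own argument: the paper also rewrites the defining inclusion as ``$d(a,b)\ge\eta$ implies $d(f(a),f(b))\ge t$'', identifies $T_\eta(f)$ with $\min_{(a,b)\in A_\eta} d(f(a),f(b))$, and derives the same $2$-Lipschitz bound with respect to $d_\infty$. Your write-up is slightly more careful, since it explicitly checks that the supremum equals this minimum, that $A_\eta\neq\varnothing$ (using $\eta\le\diam M$), and that the minimum is positive; the paper leaves these points implicit.
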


\begin{proof}
Observe that the defining inclusion admits the following
equivalent reformulation:
\[
B(x,t)\subset f[B(f^{-1}(x),\eta)]
\]
for all $x\in M$ if and only if
$d(a,b)\ge \eta$ implies $d(f(a),f(b))\ge t$.

Assume first that
\[
B(x,t)\subset f[B(f^{-1}(x),\eta)]
\]
for every $x\in M$. If $d(a,b)\geq\eta$, then $b\notin B(a,\eta)$, and hence
$f(b)\notin f[B(a,\eta)]$. The hypothesis $B(f(a),t)\subset f[B(a,\eta)]$ leads us to conclude that $f(b)\notin B(f(a),t)$, i.e., 
$d\bigl(f(a),f(b)\bigr)\ge t$.

Conversely, if the latter condition holds and
$y\in B(x,t)$, then $d\bigl(f^{-1}(x),f^{-1}(y)\bigr)<\eta$, which implies
\[
y\in f\bigl[B(f^{-1}(x),\eta)\bigr].
\]
Therefore, the two conditions are equivalent.

From the latter condition, we reformulate $T_\eta(f)$ by 
\[
T_\eta(f)
=
\min\Bigl\{
d(f(a),f(b))
:
d(a,b)\ge \eta
\Bigr\}
=\min_{(a,b)\in A_\eta}d(f(a),f(b))
\]
where 
$A_\eta
=
\{(a,b)\in M\times M:d(a,b)\ge \eta\}$.
Since $M$ is compact, $A_\eta$ is also compact.


Recall the metric $d_\infty$ on $\Homeo(M)$ defined in Section~3.1.
 For every $(a,b)\in A_\eta$, the triangle inequality gives
\[
\bigl|
d(g(a),g(b))
-
d(f(a),f(b))
\bigr|
\le
d(g(a),f(a))
+
d(g(b),f(b)) \le
2d_\infty(g,f).
\]
Since $T_\eta(f)\le d(f(a),f(b))$ for all $(a,b)\in A_\eta$, it follows that for all $(a,b)\in A_\eta$,
\[ T_\eta(f)-2d_\infty(g,f) \le d(f(a),f(b))-2d_\infty(g,f)\le d(g(a),g(b)),\]
which gives rise to $T_\eta(f)-2d_\infty(g,f) \le T_\eta(g)$. Similarly, we obtain
\[ T_\eta(g)\le d(g(a),g(b)) \le d(f(a),f(b))+2d_\infty(g,f) \] and then
$ T_\eta(g)\le T_\eta(f)+2d_\infty(g,f)$.
Finally, we get 
\[
|T_\eta(g)-T_\eta(f)|
\le
2d_\infty(g,f),
\]
which implies that $T_\eta$ is $2$-Lipschitz with respect to $d_\infty$.
\end{proof}

\subsection{Perturbed Domain Sequences}

We return to the expanding datum $
\mathcal D=(I,\mathcal U,S,\lambda,\delta).
$
The nestedness and uniform contraction properties established in
Section~3 are consequences of the local expansion property of the
generators.

Let $\rho : \Gamma\to\Homeo(M)$ be a homomorphism. For a
$(\mathcal D,\eta)$-code $\alpha$ for $x\in M$, define the perturbed
domain sequence by
\[
D_k^{\alpha,\rho}
:=
\rho(r_k^\alpha)[B(x_{k+1}^\alpha,\eta)].
\]
The following proposition shows that suitably weakened versions of these
properties remain valid after sufficiently small $C^0$ perturbations.



\begin{proposition}\label{prop:perturbp2}
Suppose that $\Gamma < \operatorname{Homeo}(M)$ is expanding on a compact metric space $M$ with an expanding datum $\D=(I,\U,S,\la,\de)$. For each $0<\de_0<\de$ and each $L\geq 0$, there exists a neighborhood $\mathcal V$ of $\iota:\Gamma\to \homM$ such that the following holds.
\begin{itemize}
\item[\textup{(i)}] $D_{k}^{\al,\rho}\supset D_{k+1}^{\al,\rho}$ for all $k\geq0$, all $\rho\in \mathcal V$, all $\eta\in [\de_0,\de]$ and all $(\D,\eta)$-code $\al$.
\item[\textup{(ii)}] There exists $n_0=n_0(\mathcal D,\delta_0,L)\in\mathbb N$ such that for all $\eta\in [\de_0,\de]$ and all $(\D,\eta)$-codes $\al, \be$ with $x^\al_0=x^\be_0$, 
if $d_S(r^\al_i, r^\be_j)\leq L$, then $$D^{\al,\rho}_{i+n_0} \subset D^{\be,\rho}_j$$ for all $\rho\in \mathcal V$. 
\item[\textup{(iii)}] If $\al$ is a special code for $x$ with $B(x,\eta)\subset U_{\al(0)}$, then 
$$D^{\alpha,\rho}_{n_0} \subset B\left(x,\frac{\de_0}{3}\right).$$
\end{itemize}
\end{proposition}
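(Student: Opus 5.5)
The plan is to reduce all three statements to finitely many $C^0$-closeness conditions on $\rho$: one on the generators for (i), and one on the finitely many group elements of word length at most $n_0+L$ for (ii) and (iii). For each such condition we combine Proposition~\ref{prop:deformp}, Proposition~\ref{prop:injr} and the uniform diameter bound behind Corollary~\ref{cor:nsp}. Two preliminary observations handle the dependence on $\eta$. First, a $(\D,\eta)$-code with $\eta\in[\de_0,\de]$ is also a $(\D,\de_0)$-code. Second, Lemma~\ref{lem:insp}(ii) gives $D_k^\al\subset s_{\al(0)}[B(x_1^\al,\de\la^{-k})]$. Hence the integers $n_k$ of Corollary~\ref{cor:nsp} can be chosen uniformly for all $\eta\in[\de_0,\de]$.

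\emph{Step (i).} Fix $\la_0\in(1,\la)$. Apply Proposition~\ref{prop:deformp} to each $s_i^{-1}$, which is $(\la,U_i;\de)$-expanding, with lower radius $\de_0$. This gives a neighborhood $\mathcal V_i$ of $s_i^{-1}$ such that
\[
g[B(y,\eta)]\supset B(s_i^{-1}(y),\la_0\eta)\supset B(s_i^{-1}(y),\eta)
\]
for all $g\in\mathcal V_i$, all $\eta\in[\de_0,\de]$, and all $y$ with $B(y,\eta)\subset U_i$. Require $\rho(s_i)^{-1}\in\mathcal V_i$ for every $i$. Taking $y=x^\al_{k+1}$ and $i=\al(k+1)$, we get $B(x^\al_{k+2},\eta)\subset\rho(s_{\al(k+1)})^{-1}[B(x^\al_{k+1},\eta)]$. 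Applying $\rho(r^\al_{k+1})$ and using that $\rho$ is a homomorphism yields (i), exactly as in the proof of Lemma~\ref{lem:insp}. The point is that Proposition~\ref{prop:deformp} recenters the ball at the unperturbed image $s_i^{-1}(y)$, which is precisely the point $x^\al_{k+2}$ used in the perturbed domain sequence.

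\emph{Step (ii).} This is the main obstacle. The perturbed maps $\rho(s_i)$ are no longer contracting at arbitrarily small scales, as the Example after Lemma~\ref{lem:sep0} shows, so the argument of Lemma~\ref{lem:orucp} cannot be copied directly. Instead I would fix the shift $n_0$ in advance and use only closeness of $\rho$ to $\iota$ on the finite set $W$ of elements of word length at most $n_0+L$.

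Let $W_L$ be as in Lemma~\ref{lem:orucp}, and set
\[
t=\min_{f\in W_L}T_{\de_0}(f)>0.
\]
Since $T_\eta$ is monotone in $\eta$, this gives $T_\eta(\rho(f))\ge t-2d_\infty(\rho(f),f)$ by Proposition~\ref{prop:injr}. Choose $n_0$ so that $\diam D^\ga_{n_0-1}<\min\{t/4,\de_0/6\}$ for every code $\ga$. Then choose $\varepsilon<\min\{t/8,\de_0/6\}$ and require $d_\infty(\rho(w),w)<\varepsilon$ for all $w\in W$.

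Now suppose $f=(r^\al_i)^{-1}r^\be_j\in W_L$, and let $\ga$ be the shifted code, so that $f(x^\be_{j+1})=x^\al_{i+1}=x^\ga_0$. We bound the two sides separately.
\begin{itemize}
\item[(a)] Applying $T_\eta(\rho(f))$ at the point $z=\rho(f)(x^\be_{j+1})$, which lies within $\varepsilon$ of $x^\al_{i+1}$, gives
\[
B\bigl(x^\al_{i+1},\,t-3\varepsilon\bigr)\subset\rho(f)[B(x^\be_{j+1},\eta)].
\]
\item[(b)] Since $\rho(r^\ga_{n_0-1})$ is $\varepsilon$-close to $r^\ga_{n_0-1}$, we have
\[
\rho(r^\ga_{n_0-1})[B(x^\ga_{n_0},\eta)]\subset N_\varepsilon(D^\ga_{n_0-1})\subset B(x^\al_{i+1},\,t/4+\varepsilon).
\]
\end{itemize}
Because $t/4+\varepsilon<t-3\varepsilon$, the set in (b) lies inside the set in (a). Applying $\rho(r^\al_i)$ and using $r^\al_i r^\ga_{n_0-1}=r^\al_{i+n_0}$ gives $D^{\al,\rho}_{i+n_0}\subset D^{\be,\rho}_j$.

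\emph{Step (iii).} The same estimate applies with the code $\al$ itself. By the choice of $n_0$ and $\varepsilon$,
\[
D^{\al,\rho}_{n_0}\subset N_\varepsilon(D^\al_{n_0})\subset B\bigl(x,\,\diam D^\al_{n_0}+\varepsilon\bigr)\subset B(x,\de_0/3).
\]
The specialness hypothesis is not needed for this bound. Finally, let $\mathcal V$ be the intersection of the finitely many neighborhoods imposed above; it depends only on $\D$, $\de_0$ and $L$.
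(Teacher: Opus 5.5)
Your proof is correct. Part (i) is exactly the paper's argument, but for (ii) and (iii) you take a genuinely different route.

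\textbf{The paper's route.} The paper proves a perturbed analogue of Lemma~\ref{lem:insp}(ii). It applies Proposition~\ref{prop:deformp} to the generators on the whole range of radii $[\lambda_0^{-n_0-1}\delta_0,\delta]$ to get $D^{\alpha,\rho}_{n_0}\subset\rho(s_{\alpha(0)})[B(x^\alpha_1,\lambda_0^{-n_0}\eta)]$. It then pushes this set into $B(x,t_L/2)\subset\rho(f)[B(f^{-1}(x),\eta)]$, using two ingredients:
\begin{itemize}
\item equicontinuity of the maps $\rho(s_i)$ for $\rho$ near $\iota$, extracted from continuity of $T_{1/k_0}$;
\item continuity of $T_{\delta_0/2}$ on $W_L$.
\end{itemize}
Specialness enters in (iii) through one more contraction step by $\rho(s_{\alpha(0)})$.

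\textbf{Your route.} You run a finite-time shadowing argument instead.
\begin{itemize}
\item You fix $n_0$ from the unperturbed uniform shrinking alone. Your remark that the integers of Corollary~\ref{cor:nsp} can be chosen uniformly in $\eta\in[\delta_0,\delta]$ is exactly what is needed here.
\item You then require only that $\rho$ be $\varepsilon$-close to $\iota$ on a finite ball of $\Gamma$. This is a neighborhood condition because composition is continuous in the $C^0$ topology. It places the perturbed domain at depth $n_0-1$ in the $\varepsilon$-neighborhood of the unperturbed one.
\item Monotonicity in $\eta$ plus the $2$-Lipschitz bound for $T_{\delta_0}$ supply the margin $t-3\varepsilon$ around the target.
\end{itemize}

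\textbf{Comparison.} Your version is more elementary in several ways. Proposition~\ref{prop:deformp} is needed only for (i), and no perturbed contraction at small scales is required. Part (iii) holds for every code, not only special ones. Your indexing produces exactly $D^{\alpha,\rho}_{i+n_0}$. By contrast, the paper's chain, applied to the shifted code as in Lemma~\ref{lem:orucp}, literally yields $D^{\alpha,\rho}_{i+n_0+1}$; this is harmless after renaming $n_0$. What the paper's route gives in addition is quantitative information: the perturbed maps themselves still contract the domains exponentially down to a fixed depth. This keeps the argument parallel to Section~\ref{sec:ucea}, whereas your argument only transports the unperturbed estimate across a small perturbation.

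\textbf{One small slip.} In (iii) you use $D^{\alpha,\rho}_{n_0}\subset N_\varepsilon(D^\alpha_{n_0})$. This requires $\rho(r^\alpha_{n_0})$ to be $\varepsilon$-close to $r^\alpha_{n_0}$. That word can have length $n_0+1$, which lies outside your set $W$ when $L<1$, and $L=0$ is allowed. There are two easy fixes:
\begin{itemize}
\item enlarge $W$ to words of length at most $n_0+L+1$; or
\item use (i) to write $D^{\alpha,\rho}_{n_0}\subset D^{\alpha,\rho}_{n_0-1}\subset N_\varepsilon(D^\alpha_{n_0-1})$, for which you already have the diameter bound.
\end{itemize}
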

\begin{proof}
We first prove (i). Let $1<\la_0<\la$ and $0<\de_0<\de$. For each $i\in I$, Proposition \ref{prop:deformp} gives a neighborhood
$\mathcal W_i$ of $s_i^{-1}$ in $\Homeo(M)$. Since $I$ is finite,
there exists a neighborhood $\mathcal V_0$ of $\iota$ in
$\Hom(\Gamma,\Homeo(M))$ such that
\[
\rho(s_i)^{-1}\in\mathcal W_i
\]
for every $\rho\in\mathcal V_0$ and every $i\in I$.
Then for all integers $k\geq 0$, all $\eta\in [\de_0,\de]$, and all $(\D,\eta)$-code $\alpha$, we have
\[ \rho(s_{\al(k+1)})^{-1}[B(x^\al_{k+1},\eta)]\supset B(s_{\al(k+1)}^{-1}(x_{k+1}^\al), \lambda_0 \eta)=B(x^\al_{k+2},\la_0 \eta)\supset B(x^\al_{k+2}, \eta).\]
Applying $\rho(r_{k+1}^\alpha)$ to this inclusion gives $D_{k}^{\al,\rho}\supset D_{k+1}^{\al,\rho}$.

To prove (ii) and (iii), let $$W_L=\{g\in\Gamma : |g|_S\leq L\} \text{ and }t_L=\min \{T_{\de_0/2}(f) :  f\in W_L\}>0$$ where $T_{\de_0/2}$ is the function defined in Proposition \ref{prop:injr}. Since $T_{\de_0/2}$ is continuous and $W_L$ is finite, there exists a neighborhood $\mathcal V_1$ of $\iota:\Gamma\to \homM$ such that for all $\rho\in \mathcal V_1$,
\[ \min \{T_{\de_0/2}(\rho(f)) : f\in W_L\} > t_L/2.\]
In other words, for all $f\in W_L$ and all $x\in M$, 
\[ B(x,t_L/2)\subset \rho(f)[B(\rho(f)^{-1}(x),\de_0/2)].\]
Since $W_L$ and $S$ are finite, there exists a neighborhood $\mathcal V_2$ of $\iota$ such that
\[
d_\infty(f, \rho(f))
<\frac{\delta_0}{2}\leq \frac{\eta}{2} \text{ and } d_\infty(\rho(s),s)<\frac{t_L}{4}
\]
for every $f\in W_L$ and $s\in S$.
Then for all $\eta\in [\de_0,\de]$ and  all $f\in W_L$, it is easy to check that $$B(\rho(f)^{-1}(x),\eta/2)\subset B(f^{-1}(x),\eta).$$

Choose $k_0\in \mathbb N$ so that $1/k_0<t_L/4$. Due to the continuity of $T_{1/k_0}$, there exist a neighborhood $\mathcal V_3$ of $\iota:\Gamma\to \homM$ and $\epsilon_0>0$ such that for all $\rho \in \mathcal V_3$, all $x\in M$ and all $s_i\in S$,
\[ \rho(s_i) [B(x,\epsilon_0)]\subset B(\rho(s_i)(x),1/{k_0}).\]
Now choose $n_0\in \mathbb N$ such that \[ \la_0^{-n_0}\de<\epsilon_0 \text{ and } \la_0^{-n_0-1}\de<\frac{\de_0}{3}.\]
Proposition \ref{prop:deformp} gives a neighborhood $\mathcal V_4$ of $\iota$ such that 
 for every $\rho\in\mathcal V_4$, every $i\in \I$ and every $r\in[\la_0^{-n_0-1}\de_0,\de]$,
\[ \rho(s_i)^{-1}[B(x,r)]\supset B(s_i^{-1}(x),\la_0r) \textup{ whenever } B(x,r)\subset U_i.\]
 By a similar argument as in the proof of Lemma \ref{lem:insp},
\begin{align*}
D^{\alpha, \rho}_{n_0}
&= \rho(s_{\alpha(0)})\cdots \rho(s_{\alpha(n_0)})[B(x_{n_0+1}^\alpha,\eta)] \\
&\subset \rho(s_{\alpha(0)})\cdots \rho(s_{\alpha(n_0-1)})
   [B(x_{n_0}^\alpha,\lambda_0^{-1}\eta)] \\
&\subset \rho(s_{\alpha(0)})\cdots \rho(s_{\alpha(n_0-2)})
   [B(x_{n_0-1}^\alpha,\lambda_0^{-2}\eta)] \\
&\phantom{\subset}\vdots \hspace{1.8cm} \vdots \\
&\subset \rho(s_{\alpha(0)})
   [B(x_{1}^\alpha,\lambda_0^{-n_0}\eta)].
\end{align*}
for every $(\D,\eta)$-codes $\alpha$ and every $\eta\in [\de_0,\de]$. If $\al$ is a special code for $x$, we have 
\begin{align*}
D^{\alpha,\rho}_{n_0} \subset \rho(s_{\alpha(0)}) [B(x_{1}^\alpha,\lambda_0^{-n}\eta)] \subset B(x,\lambda_0^{-n_0-1}\eta)\subset B(x,\de_0/3).
\end{align*}

Set $\mathcal V=\mathcal V_0\cap \mathcal V_1\cap \mathcal V_2\cap \mathcal V_3 \cap \mathcal V_4$. 
Then for every $\rho\in\mathcal V$, every $(\D,\eta)$-code with $\eta\in [\de_0,\de]$ and every $f\in W_L$,
\begin{align*}
D^{\alpha,\rho}_{n_0} &\subset \rho(s_{\alpha(0)})[B(x_{1}^\alpha,\lambda_0^{-n_0}\eta)] \\
& \subset  \rho(s_{\alpha(0)})[B(x_{1}^\alpha,\epsilon_0)] \\
&\subset B(\rho(s_{\al(0)})(x_1^\al),1/k_0) \\
&\subset B(\rho(s_{\al(0)})(x_1^\al),t_L/4) \\
&\subset B(s_{\al(0)}(x_1^\al),t_L/2) \\
&= B(x,t_L/2) \\
&\subset \rho(f)[B(\rho(f)^{-1}(x),\de_0/2)]\\
&\subset \rho(f)[B(f^{-1}(x),\de_0)]\\
&\subset \rho(f)[B(f^{-1}(x),\eta)].
\end{align*}
which proves the second assertion by the same argument as in
Lemma \ref{lem:orucp}.
\end{proof}

Unlike the unperturbed case,
Proposition~\ref{prop:perturbp2}
does not imply that the diameters of the perturbed domain sequences
converge to zero. Thus the perturbed domains need not shrink to points, even though they
remain nested and satisfy the comparison property above.


\section{Local $C^0$-semirigidity}

We are now ready to prove the main theorem of the paper.
The perturbation results established in the previous section provide
nested families of perturbed domains satisfying the same compatibility
properties as in the unperturbed setting.
Using the meandering hyperbolicity assumption, we show that these
families determine a $\rho$-equivariant partition of $M$, from which
the desired semi-conjugacy is obtained.

\begin{definition}[Local $C^0$-semi-rigidity]\label{def:semirigid}
Let
$
\rho_0 : \Gamma\to\Homeo(M)
$
be an action. Then $\rho_0$ is \emph{locally semi-rigid} in the $C^0$ topology if there is a neighborhood $\mathcal V$ of $\rho_0$ in $\Hom(\Gamma,\homM)$ such that for each $\rho \in \mathcal V$, there exists a continuous surjective map
$\phi :  M\to M$ such that
\[
\phi\circ\rho(g)=\rho_0(g)\circ \phi
\]
for every $g\in\Gamma$. Such a map $\phi$ is called \emph{a semi-conjugacy}, or more precisely, a $(\rho, \rho_0)$-semi-conjugacy.
\end{definition}

\begin{proof}[Proof of Theorem \ref{thm:main}]
Let $(\D\prec\D';L)$ be a meandering-hyperbolicity datum for the $\Gamma$-action on $M$.
Write 
\[
\mathcal D=(I,\mathcal U,S,\lambda,\delta)
\quad\text{and}\quad
\mathcal D'=(I',\mathcal U',S',\lambda',\delta').
\]

Fix $0<\delta_0<\delta$.  Apply Proposition \ref{prop:perturbp2} to the refined datum $\mathcal D'$ and the
constant $L$, and let $\mathcal V$ be the resulting neighborhood.
Since every $\mathcal D$-code is also a $\mathcal D'$-code, the
conclusions apply to all codes used below.
Throughout the proof, let
$\rho\in\mathcal V$ and fix $\eta\in[\delta_0,\delta]$.
We divide the proof into several steps.

\medskip

\noindent
\textbf{Step 1. Construction of the perturbed limit sets.}

\medskip

For each
$x\in M$,
choose an arbitrary
$(\mathcal D,\eta)$-code
$\alpha$
for
$x$
and define

\[
\Phi_\alpha(x)
=
\bigcap_{k=0}^{\infty}
\cl({D_k^{\alpha,\rho}}).
\]

By Proposition \ref{prop:perturbp2}(i), the compact sets
$
\cl(D_k^{\alpha,\rho})
$
form a nested sequence. Since each is nonempty and $M$ is compact,
their intersection $\Phi_\alpha(x)$ is nonempty.

\medskip

\noindent
\textbf{Step 2. Independence of the choice of code.}

\medskip

We next show that the above definition is independent of the chosen
$(\mathcal D,\eta)$-code.

\begin{lemma}
For every $\eta\in[\delta_0,\delta]$, every $x\in M$, every pair of
$(\mathcal D,\eta)$-codes $\alpha,\beta$ for $x$, and every
$\rho\in\mathcal V$, one has
\[ \Phi_\alpha(x)=\bigcap_{k=0}^\infty \cl(D^{\al,\rho}_k)=\bigcap_{k=0}^\infty \cl(D^{\beta,\rho}_k)=\Phi_\beta(x). \]
\end{lemma}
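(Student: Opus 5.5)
The plan is to reduce the statement to the single-step relation $\approx_L$ and then chain along the finite sequence supplied by meandering hyperbolicity.

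Since $(\D\prec\D';L)$ is a meandering-hyperbolicity datum, any two rays $r^\al,r^\be\in\Ray_x(\D,\eta)$ are $L$-equivalent as rays in $\Ray_x(\D',\eta)$. So there is a chain of $(\D',\eta)$-codes
\[
\al=\ga_0,\ga_1,\ldots,\ga_m=\be
\]
for $x$ with $r^{\ga_{l}}\approx_L r^{\ga_{l+1}}$. The intermediate codes are only $\D'$-codes, so I will work with $\Phi_\ga(x)$ for arbitrary $(\D',\eta)$-codes $\ga$. This is legitimate because $\mathcal V$ was obtained by applying Proposition~\ref{prop:perturbp2} to $\D'$ and $L$. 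Part (i) of that proposition gives nestedness for all $(\D',\eta)$-codes, so $\Phi_\ga(x)$ is defined for them. It therefore suffices to show $\Phi_\ga(x)=\Phi_{\ga'}(x)$ whenever $r^\ga\approx_L r^{\ga'}$ for $(\D',\eta)$-codes $\ga,\ga'$ of the same point $x$.

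For the single step, let $P,Q\subset\N$ be the infinite sets with
\[
d_{\mathrm{Haus}}\bigl(r^\ga(P),r^{\ga'}(Q)\bigr)\le L.
\]
I will show $\Phi_\ga(x)\subset\Phi_{\ga'}(x)$; the reverse inclusion follows by symmetry. Fix $k\ge0$. Since $Q$ is infinite, pick $j\in Q$ with $j\ge k$. By the Hausdorff bound there is $i\in P$ with
\[
d_{S'}\bigl(r^\ga_i,r^{\ga'}_j\bigr)\le L.
\]
Proposition~\ref{prop:perturbp2}(ii), applied to $\D'$, then gives
\[
D^{\ga,\rho}_{i+n_0}\subset D^{\ga',\rho}_j\subset D^{\ga',\rho}_k,
\]
where the last inclusion uses nestedness (i). Taking closures and using nestedness again,
\[
\Phi_\ga(x)\subset \cl\bigl(D^{\ga,\rho}_{i+n_0}\bigr)\subset \cl\bigl(D^{\ga',\rho}_k\bigr).
\]
Since $k$ was arbitrary, intersecting over $k$ gives $\Phi_\ga(x)\subset\Phi_{\ga'}(x)$.

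Chaining these equalities along $\ga_0,\dots,\ga_m$ yields $\Phi_\al(x)=\Phi_\be(x)$.

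The main point needing care is bookkeeping rather than estimates. The Hausdorff distance in the meandering definition is measured in $d_{S'}$, so (ii) must be the version for $\D'$ with word metric $d_{S'}$. That is exactly how $\mathcal V$ was chosen, with $n_0=n_0(\D',\de_0,L)$. One also needs $\eta\in[\de_0,\de]\subset[\de_0,\de']$, so that the $\D'$-conclusions apply; this holds because $\de\le\de'$.

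A second point is that the index $i\in P$ produced by the Hausdorff bound need not be large. This is harmless, since only the index $j\ge k$ on the other side must be large, and $\Phi_\ga(x)\subset\cl(D^{\ga,\rho}_{i+n_0})$ holds for every $i$.
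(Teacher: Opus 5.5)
Your proof is correct and follows the paper's approach: meandering hyperbolicity gives a chain of $(\D',\eta)$-codes, which reduces the problem to a single $\approx_L$ step, and Proposition~\ref{prop:perturbp2}(ii) plus nestedness then give both inclusions. Your version is a bit more careful than the paper's. The paper asserts strictly increasing paired sequences $(a_i),(b_i)$, whereas you use each direction of the Hausdorff bound separately, observe that only the target-side index must be large, and make explicit that the intermediate codes are $\D'$-codes.
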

\begin{proof}

Due to the meandering hyperbolicity of the $\Gamma$-action on $M$, any two $(\D,\eta)$-codes $\al$ and $\beta$ for $x$ can be regarded as $(\mathcal D',\eta)$-codes and moreover, there exists a finite chain of interpolating $(\D',\eta)$-codes $\al=\gamma_0$, $\gamma_1,\ldots, \gamma_n=\beta$ for $x$ such that 
\[
r^{\ga_0} \approx_L r^{\ga_1} \approx_L \cdots \approx_L r^{\ga_n}. 
\]
It therefore suffices to consider the case
$r^\alpha\approx_Lr^\beta$. If $r^\alpha\approx_Lr^\beta$, there exist stictly increasing sequences $(a_i)$ and $(b_i)$ in $\mathbb N$ such that 
\[ d(r^\al_{a_i},r^\beta_{b_i})\leq L.\] 
By Proposition \ref{prop:perturbp2} (ii), 
\[D^{\al,\rho}_{a_i+n_0}\subset D^{\beta,\rho}_{b_i} \textup{ and } D^{\beta,\rho}_{b_i+n_0}\subset D^{\al,\rho}_{a_i}.\]
Since $a_i, b_i$ go to the infinity and both perturbed domain sequences are
nested, the first family of inclusions implies
\[
\bigcap_{k=0}^\infty \cl(D_k^{\alpha,\rho})
\subset
\bigcap_{k=0}^\infty\cl(D_k^{\beta,\rho})
\]
while the second gives the reverse inclusion.
\end{proof}

We may therefore define \[ \Phi(x):=\Phi_\alpha(x), \] independently of the chosen code.

\medskip

\noindent
\textbf{Step 3. Equivariance of the limit sets.}

\medskip

We next show that the family
\(
\{\Phi(x)\}_{x\in M}
\)
is equivariant under the perturbed action.

\begin{lemma}\label{lem:equi}
For every $x\in M$ and every $s\in S$, $$\Phi(s(x))=\rho(s)[\Phi(x)].$$
\end{lemma}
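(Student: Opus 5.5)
The idea is to build, from a code $\alpha$ for $x$, a code for $s(x)$ whose perturbed domain sequence is the $\rho(s)$-image of that of $\alpha$, up to an index shift. Then we use independence of the code (Step 2) to identify the limit sets.

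Let $s=s_i\in S$, and let $\alpha$ be a $(\mathcal D,\eta)$-code for $x$. Define $\beta:\mathbb N\to I$ by $\beta(0)=i$ and $\beta(k+1)=\alpha(k)$ for $k\ge0$. We check that $\beta$ is a $(\mathcal D,\eta)$-code for $y=s(x)$. Indeed, $x^\beta_0=y$ and $x^\beta_1=s_i^{-1}(y)=x=x^\alpha_0$, so $x^\beta_{k+1}=x^\alpha_k$ for all $k$. Condition (iii) requires $B(x^\beta_k,\eta)\subset U_{\beta(k)}$ for $k\ge1$, that is, $B(x^\alpha_{k-1},\eta)\subset U_{\alpha(k-1)}$ for all $k\ge1$. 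This holds for $k\ge2$ by the code property of $\alpha$. At $k=1$ it reads $B(x,\eta)\subset U_{\alpha(0)}$, which is exactly the statement that $\alpha$ is special.

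So I first choose $\alpha$ special. This is possible because $\eta\le\delta$ is a Lebesgue number for $\mathcal U$: pick $\alpha(0)$ with $B(x,\eta)\subset U_{\alpha(0)}$ and continue inductively. Step 2 allows any code in the definition of $\Phi(x)$, so this choice costs nothing. Note that $\beta(0)=i$ may be arbitrary, which is allowed because codes need not be special at index $0$.

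With this $\beta$ we have $r^\beta_{k+1}=s_i r^\alpha_k$ and $x^\beta_{k+2}=x^\alpha_{k+1}$. Therefore
\[
D^{\beta,\rho}_{k+1}=\rho(s_i)\rho(r^\alpha_k)\bigl[B(x^\alpha_{k+1},\eta)\bigr]=\rho(s_i)\bigl[D^{\alpha,\rho}_k\bigr].
\]
Since $\rho(s_i)$ is a homeomorphism of the compact space $M$, it commutes with closures and with intersections. The sequence $\cl(D^{\beta,\rho}_k)$ is nested by Proposition~\ref{prop:perturbp2}(i), so dropping the $k=0$ term does not change the intersection. Hence
\[
\Phi_\beta(s(x))=\bigcap_{k\ge1}\cl(D^{\beta,\rho}_k)=\rho(s_i)\Bigl[\bigcap_{k\ge0}\cl(D^{\alpha,\rho}_k)\Bigr]=\rho(s)[\Phi(x)].
\]
Step 2 gives $\Phi(s(x))=\Phi_\beta(s(x))$, which proves the lemma.

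The main obstacle is verifying that the prepended sequence $\beta$ is a genuine $(\mathcal D,\eta)$-code. The whole difficulty sits at index $1$, where we need $B(x,\eta)\subset U_{\alpha(0)}$. This is why $\alpha$ must be taken special rather than arbitrary, and why the independence statement of Step 2 is essential. A minor point is that Step 2 and Proposition~\ref{prop:perturbp2} are applied with the same $\eta$ and the same $\rho\in\mathcal V$ for both codes, which holds here since $\beta$ uses the same $\eta$ as $\alpha$.
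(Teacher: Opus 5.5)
Your proof is correct and is essentially the paper's argument. You relate a code for $x$ and a code for $s(x)$ by a one-symbol shift, use that $\rho(s)$ commutes with closures and intersections together with nestedness, and invoke the code-independence of Step~2. The only difference is the direction of the shift: the paper picks a code for $x$ with $s_{\alpha(0)}=s^{-1}$ (using the symmetry of $S$) and drops the first symbol, so the tail is automatically a code for $s(x)$, whereas you prepend $s$ to a code for $x$, which is why you need $\alpha$ to be special to handle index $1$.
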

\begin{proof}
Since the domain condition in the definition of a $(\mathcal D,\eta)$-code is imposed only for indices $k\geq1$, the initial symbol may be chosen arbitrarily. Due to the symmetry of $S$, we may choose a $(\mathcal D,\eta)$-code $\alpha$ for $x$ such that \[ s_{\alpha(0)}=s^{-1}. \] Then \[ x_1^\alpha = s_{\alpha(0)}^{-1}(x) = s(x). \] Define the shifted sequence $\alpha_1\colon\mathbb N\to I$ by \[ \alpha_1(k)=\alpha(k+1). \] It is a $(\mathcal D,\eta)$-code for $s(x)$. Since $\rho(s)$ is a homeomorphism, it commutes with closures and intersections, and hence \[ \begin{aligned} \Phi(x) &= \bigcap_{k=0}^{\infty} \cl\left( \rho(s_{\alpha(0)})\cdots\rho(s_{\alpha(k)}) [B(x_{k+1}^\alpha,\eta)] \right)\\ &= \rho(s)^{-1} \bigcap_{k=0}^{\infty} \cl\left( \rho(s_{\alpha(1)})\cdots\rho(s_{\alpha(k)}) [B(x_{k+1}^\alpha,\eta)] \right)\\ &= \rho(s)^{-1}\Phi(sx). \end{aligned} \] Therefore, $\Phi(sx)=\rho(s)[\Phi(x)]. $
\end{proof}

Since $S$ generates $\Gamma$, it follows that
\[
\Phi(\gamma x)=\rho(\gamma)[\Phi(x)]
\]
for every $\gamma\in\Gamma$.

\medskip

\noindent
\textbf{Step 4. The family
$\{\Phi(x)\}_{x\in M}$
forms a partition of
$M$.}

\medskip

The next lemma shows that the limit sets are pairwise disjoint and
cover the whole space.

\begin{lemma}
The collection $\{\Phi(x)\}_{x\in M}$ is a partition of $M$. That is, the union of all elements of the collection is $M$ and any two distinct elements of the collection are disjoint.
\end{lemma}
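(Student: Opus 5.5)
The plan is to prove the two halves separately: pairwise disjointness via the localization in Proposition~\ref{prop:perturbp2}(iii) combined with the separation statement of Corollary~\ref{cor:nsp} and equivariance (Lemma~\ref{lem:equi}); covering via a shadowing argument that builds, for each $z\in M$, an unperturbed code whose perturbed domains all contain $z$. As in the rest of the proof, fix $\rho\in\mathcal V$ and $\eta\in[\delta_0,\delta]$. If needed, I will shrink $\eta$ (say to $\delta_0$) and $\mathcal V$ so that there is a definite margin $\delta-\eta$ for the shadowing step.

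\textbf{Disjointness.} First, $\Phi(x)\subset B(x,\delta_0/3)$ for every $x$. Indeed, $\delta$ is a Lebesgue number for $\mathcal U$ and $\eta\le\delta$, so every $x$ admits a special $(\mathcal D,\eta)$-code $\alpha$. Step~2 shows $\Phi(x)$ does not depend on the code, and Proposition~\ref{prop:perturbp2}(iii) gives $\Phi(x)\subset \cl(D^{\alpha,\rho}_{n_0})\subset \cl B(x,\delta_0/3)$. Since the radius can be taken slightly smaller, a closed ball is harmless.

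Now let $x\ne y$. By Corollary~\ref{cor:nsp} there is $\gamma\in\Gamma$ with $d(\gamma x,\gamma y)\ge\delta>2\delta_0/3$. Hence $\Phi(\gamma x)\cap\Phi(\gamma y)=\emptyset$. By the equivariance $\Phi(\gamma x)=\rho(\gamma)[\Phi(x)]$, and since $\rho(\gamma)$ is a bijection, $\Phi(x)\cap\Phi(y)=\emptyset$. So distinct points have disjoint limit sets, which in particular gives disjointness of distinct members of the collection.

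\textbf{Covering.} Fix $z\in M$. I first run the perturbed backward orbit of $z$ and choose symbols greedily. Set $u_0=z$. Given $u_k$, choose $\alpha(k)$ with $B(u_k,\delta)\subset U_{\alpha(k)}$ (Lebesgue number), and put $u_{k+1}=\rho(s_{\alpha(k)})^{-1}(u_k)$.

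Next, for each $n$ I shadow this orbit by a genuine orbit. Put $x^{(n)}_{n+1}=u_{n+1}$ and define backwards $x^{(n)}_k=s_{\alpha(k)}(x^{(n)}_{k+1})$. Write $e_k=d(x^{(n)}_k,u_k)$ and let $\varepsilon=\max_i d_\infty(\rho(s_i),s_i)$. The contraction of Lemma~\ref{lem:con} on $U_{\alpha(k)}$ gives
\[
e_k\le \lambda^{-1}e_{k+1}+\varepsilon,
\qquad\text{so}\qquad
e_k\le \varepsilon/(1-\lambda^{-1})
\]
uniformly in $k$ and $n$. This is legitimate as long as the relevant points stay in $B(u_{k+1},\delta)$, which an induction shows. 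Shrinking $\mathcal V$ so that $\varepsilon/(1-\lambda^{-1})<\delta-\eta$, we get $B(x^{(n)}_k,\eta)\subset B(u_k,\delta)\subset U_{\alpha(k)}$. Thus $\alpha$ restricted to $\{0,\dots,n\}$ is the initial segment of a $(\mathcal D,\eta)$-code for $x^{(n)}:=x^{(n)}_0$, and
\[
z=\rho(r_n)(u_{n+1})\in \rho(r_n)\bigl[B(x^{(n)}_{n+1},\eta)\bigr]=D^{\alpha,\rho}_n.
\]

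Finally, pass to the limit. Since the symbol sequence $\alpha$ is fixed (it came from $z$ alone), only the points vary with $n$. For each fixed $k$, the points $x^{(n)}_k$ converge as $n\to\infty$: they are successive backward shadows and differ by at most $\lambda^{-(n-k)}$ times a bounded amount. Let $x=\lim_n x^{(n)}$. Then $\alpha$ is a $(\mathcal D,\eta)$-code for $x$, using the margin to keep the open-ball inclusions in the limit, and $x_k=\lim_n x^{(n)}_k$. Moreover $z=\rho(r_k)(u_{k+1})$ with $d(u_{k+1},x_{k+1})\le \varepsilon/(1-\lambda^{-1})<\eta$, so $z\in D^{\alpha,\rho}_k$ for every $k$. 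Hence $z\in\Phi(x)$, and the union of the $\Phi(x)$ is $M$.

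\textbf{Main obstacle.} The hard part is the shadowing step, specifically verifying inductively that the backward-contraction estimate of Lemma~\ref{lem:con} applies at each stage (the perturbed point must lie in the expanding domain) while keeping the margin $\delta-\eta$. If this margin is too tight for the given $\eta$, I would first run the argument with $\eta=\delta_0<\delta$ and then transfer the result, using that Step~2 makes $\Phi$ code-independent and that nestedness allows passing between radii.
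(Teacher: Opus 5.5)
Your argument is correct and is essentially the paper's: disjointness via Corollary~\ref{cor:nsp}, equivariance and Proposition~\ref{prop:perturbp2}(iii), and covering by shadowing the perturbed backward orbit of $z$ with a genuine orbit along a greedily chosen code. The differences are minor. The paper takes the shadowing point directly from the nested intersection $\bigcap_k \cl\bigl(r^\alpha_k[B(y_{k+1},\delta/3)]\bigr)$ and works at radius $\delta/3$ without comparing it to the fixed $\eta$. You instead take a Cauchy limit of finite-time shadows and transfer from radius $\delta_0$; this is valid because code-independence at radius $\delta_0$, applied to an $\eta$-code, gives $\Phi^{\delta_0}(x)\subset\Phi^{\eta}(x)$. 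Your route also yields the strict bound $d\bigl(x_{k+1},\rho(r_k)^{-1}(z)\bigr)<\eta$, which avoids the closed-ball versus closure-of-open-ball subtlety in the paper's last step.
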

\begin{proof}
Let $x,y\in M$ be distinct. By Corollary~\ref{cor:nsp}, there exists $\gamma\in\Gamma$ such that
\[
d(\gamma (x),\gamma (y))\ge\delta.
\]
Since the family $\Phi$ is equivariant and $\rho(\gamma)$ is a
homeomorphism of $M$, it suffices to prove disjointness for
$\Phi(\gamma x)$ and $\Phi(\gamma y)$. Thus, after replacing
$(x,y)$ by $(\gamma (x),\gamma (y))$, we may assume that
$d(x,y)\ge\delta$.

Choose special codes $\alpha$ and $\beta$ for $x$ and $y$, respectively. Proposition~4.3(iii) gives \[ \Phi(x) \subset \cl\left({B}\left(x,\frac{\delta_0}{3}\right)\right), \qquad \Phi(y) \subset \cl\left({B}\left(y,\frac{\delta_0}{3}\right)\right). \] Since $\delta_0<\delta$, these two closed balls are disjoint. Hence \[ \Phi(x)\cap\Phi(y)=\varnothing. \]

To verify that $\bigcup_{x\in M}\Phi(x)=M$, it suffices to show that for any $y\in M$, there exists a point $x\in M$ such that $y\in \Phi(x)$.
Fix $y\in M$. Shrink the perturbation neighborhood, if necessary, so that $d_\infty(\rho(s_i),s_i)<(\la-1)\delta/3$ for all $i\in \I$. We also assume that $0<\de_0<\de/3$.

Set $y_0=y$ and choose $\al(0) \in \I$ so that $B(y_0,2\de/3)\subset U_{\al(0)}$ and let $y_1=\rho(s_{\al(0)})^{-1}(y_0)$.
Clearly, we have $$s_{\al(0)}^{-1}[B(y_0,\de/3)]\supset B(s_{\al(0)}^{-1}(y_0),\la\de/3)\supset B(\rho(s_{\al(0)})^{-1}(y_0), \de/3)=B(y_1, \de/3).$$
Then choose $\al(1) \in \I$ so that $B(y_1,2\de/3)\subset U_{\al(1)}$ and let $y_2=\rho(s_{\al(1)})^{-1}(y_1)$. Similarly, 
$$s_{\al(1)}^{-1}[B(y_1,\de/3)]\supset B(s_{\al(1)}^{-1}(y_1),\la\de/3)\supset B(\rho(s_{\al(1)})^{-1}(y_1), \de/3)=B(y_2, \de/3).$$
In this way, we can define a sequence $\al :\N\to \I$ so that $y_{k+1}=\rho(s_{\al(k)})^{-1}(y_k)$ and $B(y_k,\de/3)\subset U_{\al(k)}$ and 
$s_{\al(k)}^{-1}\left[B(y_k,\de/3)\right]\supset B(y_{k+1},\de/3)$. Then we have a nested sequence of domains $(r^\al_k[B(y_{k+1},\de/3)]_{k=0}^\infty)$ and hence there is a point $$x\in \bigcap_{k=0}^\infty   \cl\big(r^\al_k[B(y_{k+1},\de/3)]\big).$$
From the definition of $x$, for any $k\in \N$, $$d((r^\al_k)^{-1}(x),y_{k+1})=d((r^\al_k)^{-1}(x),\rho(r^\al_k)^{-1}(y))\le\de/3$$
and moreover, $$B((r^\al_k)^{-1}(x),\de/3)\subset B(\rho(r^\al_k)^{-1}(y),2\de/3)\subset U_{\al(k+1)}.$$
These imply that $\al$ is a $(\D,\de/3)$-code for $x$ and since $\rho(r^\al_k)^{-1}(y)\in \cl(B(x_{k+1}^\al,\de/3))$ for every $k\geq 0$,
$$y\in \bigcap_{k=0}^\infty   \cl\big(\rho(r^\al_k)[B(x_{k+1}^\al,\de/3)]\big)=\Phi(x).$$
This completes the proof.
\end{proof}

Hence
\(
\{\Phi(x)\}_{x\in M}
\)
is a partition of
\(M\).

\medskip

\noindent
\textbf{Step 5. Construction of the semi-conjugacy.}

\medskip

Since the sets $\Phi(x)$ form a partition of $M$, every $y\in M$ belongs to a unique set $\Phi(x)$. Define \[ \phi(y)=x \text{ whenever }y\in\Phi(x). \] This gives a well-defined map $\phi : M \to M$. Since $\Phi(x)\neq\varnothing$ for every $x\in M$, the map $\phi$ is surjective. Equivariance of the family $\Phi$ gives \[ \phi\circ\rho(\gamma) = \iota(\gamma)\circ\phi \] for every $\gamma\in\Gamma$. We first record the uniform closeness of $\phi$ to the identity. For each $x\in M$, choose a special code $\alpha$ for $x$. By Proposition \ref{prop:perturbp2}(iii), \[ \Phi(x) \subset \cl\left({B}\left(x,\frac{\delta_0}{3}\right)\right). \] Thus, if $y\in\Phi(x)$, then \[ d(y,\phi(y)) = d(y,x) \leq\frac{\delta_0}{3}. \]

The remaining lemma establishes its continuity.

\begin{lemma}
The map $\phi:M\to M$ is continuous.
\end{lemma}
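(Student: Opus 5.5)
The plan is to prove continuity of $\phi$ sequentially: take $y_n\to y$ and show $\phi(y_n)\to\phi(y)$. Since $M$ is compact, it suffices to show that every subsequential limit of $x_n:=\phi(y_n)$ equals $x:=\phi(y)$. So pass to a subsequence with $x_n\to x'$; we must show $x'=x$. Equivalently, we show $y\in\Phi(x')$, because the sets $\Phi(\cdot)$ form a partition and $y\in\Phi(x)$.

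\textbf{Separation reduction.} Suppose $x'\neq x$. By Corollary~\ref{cor:nsp} there is $\gamma\in\Gamma$ with $d(\gamma x,\gamma x')\ge\delta$. Using the equivariance $\phi\circ\rho(\gamma)=\gamma\circ\phi$, replace $y_n,y,x_n,x,x'$ by their images under $\rho(\gamma)$ or $\gamma$. Continuity of $\rho(\gamma)$ and $\gamma$ preserves all the convergences. So we may assume $d(x,x')\ge\delta$.

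\textbf{Contradiction from the displacement bound.} The estimate $d(z,\phi(z))\le\delta_0/3$, recorded just before the lemma, gives $d(y_n,x_n)\le\delta_0/3$ and $d(y,x)\le\delta_0/3$. Letting $n\to\infty$ in the first yields $d(y,x')\le\delta_0/3$. Hence
\[
d(x,x')\le d(x,y)+d(y,x')\le \tfrac{2\delta_0}{3}<\delta,
\]
which contradicts $d(x,x')\ge\delta$. Therefore $x'=x$, so every subsequence of $(\phi(y_n))$ has a further subsequence converging to $\phi(y)$, and $\phi$ is continuous.

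The step needing the most care is the reduction. The bound $d(z,\phi(z))\le\delta_0/3$ must hold uniformly for all $z$ with the same $\delta_0$, which it does because Proposition~\ref{prop:perturbp2}(iii) applies to every point through a special code. The transfer by $\gamma$ must also be legitimate, which it is because it uses only the pointwise equivariance of $\phi$ and never continuity of $\phi$. With these two points checked, the argument uses no closedness of the partition beyond what is already established.
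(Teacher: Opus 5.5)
Your proof is correct and follows essentially the same route as the paper. Both arguments pass to a subsequence, use Corollary~\ref{cor:nsp} with the equivariance $\phi\circ\rho(\gamma)=\gamma\circ\phi$ to reduce to two $\delta$-separated limit candidates, and then reach a contradiction from the uniform displacement bound $d(z,\phi(z))\le\delta_0/3$. Your final estimate, which passes to the limit to get $d(y,x')\le\delta_0/3$, needs only $2\delta_0/3<\delta$, so it avoids the paper's re-choice of $\delta_0<\delta/3$ inside the proof.
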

\begin{proof}
Suppose that $(x_n)$ converges to $x$.
Assume, to the contrary, that
$\phi(x_n)$ does not converge to $\phi(x)$.
By compactness, after passing to a subsequence we may assume that
$\phi(x_n)$ converges to  $y$
for some
$y\neq\phi(x)$.

By Corollary~3.5, there exists $\gamma\in\Gamma$ such that \[ d(\gamma (\phi(x)),\gamma (y))\geq\delta. \]
Replacing $x_n$, $x$, and $y$ by $\rho(\gamma)(x_n)$, $\rho(\gamma)(x)$, $\gamma(y)$ respectively, and using equivariance of $\phi$, we may assume that \[ d(\phi(x),y)\geq\delta. \] Choose $0<\delta_0<\delta/3$. By Proposition~\ref{prop:perturbp2},
the neighborhood
$\mathcal V$
may be chosen so that
$$
d(p,\phi(p))
<\frac{\delta_0}{3}
=\frac{\delta}{9}
$$
for every
$p\in M$.
Since
$x_n$ converges to  $x$,
we have, for all sufficiently large $n$,
\[
\begin{aligned}
d(\phi(x_n),\phi(x))
&\le
d(\phi(x_n),x_n)
+d(x_n,x)
+d(x,\phi(x))
\\
&<
\frac{\delta}{9}
+
\frac{\delta}{9}
+
\frac{\delta}{9}
=
\frac{\delta}{3}.
\end{aligned}
\]
Therefore,
\[
d(\phi(x_n),y)
\ge
d(\phi(x),y)-d(\phi(x_n),\phi(x))
>
\delta-\frac{\delta}{3}
=
\frac{2\delta}{3}
\]
for all sufficiently large $n$. 
This contradicts the assumption that
$\phi(x_n)$ converges to $y$.
Hence
$\phi$
is continuous.
\end{proof}

By Step~3,
$
\phi\circ\rho(\gamma)
=
\iota(\gamma)\circ\phi
$
for every
\(
\gamma\in\Gamma.
\)
Step~4 shows that $\phi$ is defined on all of $M$, while the
nonemptiness of every $\Phi(x)$ implies that $\phi$ is surjective.
Therefore
\(
\phi
\)
is a semi-conjugacy from
\(
\rho
\)
to the original action $\iota$.
Finally, for any $\varepsilon>0$, Choose
\[
0<\delta_0<\min\{3\varepsilon,\delta\}.
\] According to Proposition \ref{prop:perturbp2}, there exists a neighborhood $\mathcal V$ of $\iota:\Gamma \to \homM$ such that for all $\rho \in \mathcal V$,
\[ d(p,\phi(p))<\frac{\de_0}{3}\le\varepsilon.\]
This
shows that $\phi$
is chosen close to the identityif $\rho$ is sufficiently close to the inclusion representation
$\iota$.
This completes the proof. 
\end{proof}

Theorem 5.2 provides a unified proof of the previously known
local $C^0$-semi-rigidity theorems for boundary actions of
word-hyperbolic groups and cocompact lattices.
The following corollary records these applications.

\begin{corollary}
The following actions are locally semi-rigid in the $C^0$ topology:
\begin{enumerate}
\item[\textup{(i)}] the action of a word-hyperbolic group on its Gromov boundary;
\item[\textup{(ii)}] the action of a cocompact lattice in a semisimple Lie group
$G$ on the flag variety $G/P$ where $P<G$ is a parabolic subgroup.
\end{enumerate}
\end{corollary}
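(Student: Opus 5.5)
The corollary will follow by applying Theorem~\ref{thm:main} to each of the two classes of actions. The key input is that both actions are meandering hyperbolic; this was established by Kapovich--Kim--Lee \cite{KKL}. So the plan is to check the standing hypotheses of Theorem~\ref{thm:main} in each case and then quote \cite{KKL} for meandering hyperbolicity.

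In case (i), let $\Gamma$ be word-hyperbolic with Gromov boundary $\partial\Gamma$. We equip $\partial\Gamma$ with a visual metric, which makes it a compact metric space. The group $\Gamma$ is finitely generated by definition.

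There is one subtlety. Theorem~\ref{thm:main} is stated for a subgroup $\Gamma<\Homeo(M)$, so the action must be faithful. If the action has a finite kernel $K$, I will pass to the image $\Gamma/K<\Homeo(\partial\Gamma)$, which is again word-hyperbolic and finitely generated. I will then note that a semi-conjugacy for perturbations of the image gives one for the original action. The elementary case, where $\partial\Gamma$ is empty or finite, will be excluded or handled trivially. Since \cite{KKL} shows that the boundary action is expanding and meandering hyperbolic (indeed hyperbolic, which implies meandering hyperbolic by taking $\D'=\D$), Theorem~\ref{thm:main} applies.

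In case (ii), let $\Gamma$ be a cocompact lattice in $G$ and let $M=G/P$. The flag variety $G/P$ is a compact manifold; I fix a Riemannian metric on it. A cocompact lattice is finitely generated. The action of $\Gamma$ factors through its image in $\Homeo(G/P)$. The kernel lies in the center of $G$ together with compact factors, so passing to the image is harmless by the same remark as in case (i). I will invoke \cite{KKL} for the fact that this action is expanding and meandering hyperbolic with respect to some expanding datum, and conclude by Theorem~\ref{thm:main}.

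The only real obstacle is this bookkeeping between the abstract group and its image in $\Homeo(M)$. The $C^0$ topology on $\Hom(\Gamma,\Homeo(M))$ is defined via a generating set, and a perturbation $\rho$ of the original action need not kill the kernel $K$. To handle this, I would first try restricting to $\rho$ close to $\iota$ on the generators. I would then build the map $\Phi$ in the proof of Theorem~\ref{thm:main} directly with $\Gamma$ acting through $\rho$. That construction only uses codes and rays in $\Gamma$, so faithfulness is never actually used. Hence the theorem extends verbatim to non-faithful actions, and the corollary follows.
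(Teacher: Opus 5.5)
Your proposal is correct and follows the paper's proof, which likewise just quotes meandering hyperbolicity of both actions (via Sullivan's hyperbolicity for (i) and Kapovich--Kim--Lee for (ii)) and then applies Theorem~\ref{thm:main}. The paper does not address non-faithful actions at all. Your eventual fix is the right way to handle them: run the construction for the abstract group, transferring $L$-equivalence from the image by enlarging $L$ by $\max_{k\in K}|k|_S$ when the kernel $K$ is finite. Merely passing to the image would not cover perturbations that fail to kill $K$, as you yourself observe.
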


\begin{proof}
By \cite{Sul}, the boundary action of a word-hyperbolic group is hyperbolic and, in particular, meandering hyperbolic. By \cite{KKL}, the action of a cocompact lattice in a semisimple Lie group on the corresponding flag variety is also meandering hyperbolic. The corollary therefore follows immediately from Theorem \ref{thm:main}. 
\end{proof}

\section*{Acknowledgements}

The author is deeply grateful to Misha Kapovich for many inspiring
discussions and valuable suggestions that significantly influenced
the development of this work.

\vspace{1\baselineskip}\noindent
Department of Mathematics,
Jeju National University,
Jeju 63243,
Republic of Korea\\
\url{sungwoon@jejunu.ac.kr}\\




\end{document}